\documentclass[10pt,reqno]{amsart}
\usepackage{latexsym, amsmath, amssymb, longtable, booktabs}
\usepackage{amsthm}

\usepackage[matrix,arrow]{xy}

\begin{document}

\title[Modular Brauer Classes]{On the Brauer class of Modular Endomorphism
Algebras}

\author[E. Ghate]{Eknath Ghate} 
\address{
School of Mathematics \\
Tata Institute of Fundamental Research \\
Homi Bhabha Road, Mumbai  400 005,  India.
}
\email{eghate@math.tifr.res.in}

\author[E. Gonz\'alez-Jim\'enez]{Enrique Gonz\'alez-Jim\'enez} 
\address{
Departamento de Matem\'aticas \\
Facultad de Ciencias \\
Universidad de Alcal\'a \\
E-28871 Madrid \\
Spain.
}
\email{enrique.gonzalezj@uah.es}

\author[J. Quer]{Jordi Quer}
\address{Departament Matem\`atica Aplicada II \\          
 Universitat Polit\`ecnica de Catalunya   \\
 Jordi Girona, 1-3   \\ 
 Campus Nord, Edifici Omega, Despatx 438  \\                   |
 08034 Barcelona, Spain                
}
\email{Jordi.Quer@upc.es}

\date{February 5, 2005}

\begin{abstract}
We investigate the Brauer class of the endomorphism
algebra of the motive attached to a non-CM form.  
The ramification of the algebra is shown in many
cases to be controlled by the normalized slopes of the form.
\end{abstract}
\maketitle

\vspace{-0.6cm}
%\tableofcontents

\newcommand\A{\mathbb{A}}
\newcommand\C{\mathbb{C}}
\newcommand\F{\mathbb{F}}
\newcommand\Ham{\mathbb{H}}
\newcommand\HH{{\mathrm H}}
\newcommand\mM{{\mathrm M}}
\newcommand\Q{\mathbb{Q}}
\newcommand\Qbar{{\overline{\Q}}}
\newcommand\sQ{{\mathcal{Q}}}
\newcommand\R{\mathbb{R}}
\newcommand\Z{{\mathbb{Z}}}
\newcommand\gp{\mathfrak{p}}
\newcommand\mZ{{\mathrm Z}}
\newcommand\tensor{{\otimes}}
\newcommand\disc{{\mathrm{disc}}} 
\newcommand\divides{{\> \big| \>}}
\newcommand\notdivides{{\> \not\bigl| \>}}
\newcommand{\eps}{\epsilon}
\newcommand\Aut{{\mathrm {Aut}}}
\newcommand\Br{{\mathrm {Br}}}
\newcommand\crys{{\mathrm{crys}}} 
\newcommand\End{{\mathrm {End}}}
\newcommand\Fil{{\mathrm {Fil}}}
\newcommand\Frob{{\mathrm {Frob}}}
\newcommand{\GL}{\operatorname{GL}}
\newcommand\Hom{{\mathrm {Hom}}}
\newcommand\Inf{{\mathrm {Inf}}}
\newcommand\isom{{\> \cong \>}}
\newcommand{\isoTo}{\buildrel \sim \over \longrightarrow}
\newcommand\Gal{{\mathrm {Gal}}}
\newcommand{\Leps}{L_\varepsilon}
\newcommand\lcm{{\mathrm {lcm}}}
\newcommand\mat{{ \left( {a \atop c} {b \atop d} \right)}}
\newcommand\modulo{{\mathrm {mod} \> }}
\newcommand{\New}{{{New}}}
\newcommand{\Norm}{\operatorname{Norm}}
\newcommand\ord{{\mathrm {ord}}}
\newcommand{\ssigma}{{{ }^\sigma \!}}
\newcommand{\Sch}{\operatorname{Sch}}
\newcommand{\Sh}{\operatorname{Sh}}
\newcommand{\SL}{\operatorname{SL}}
\newcommand{\To}{\longrightarrow}
\newcommand\Tr{{\mathrm {Tr}}}
\newcommand{\ur}{\mathrm{ur}}
\newcommand{\legendre}[2] {\left(\frac{#1}{#2}\right)}
\newcommand{\op}{\operatorname}

\numberwithin{equation}{section}
\newtheorem{thm}{Theorem}[section]
\newtheorem{prop}[thm]{Proposition}
\newtheorem{cor}[thm]{Corollary}
\newtheorem{remark}[thm]{Remark}
\numberwithin{table}{section}

%\numberwithin{equation}{subsection}

%\newtheorem{cor}[equation]{Corollary}
%\newtheorem{prop}[equation]{Proposition}
%\newtheorem{lemma}[equation]{Lemma}
%\newtheorem{theorem}[equation]{Theorem}
%\theoremstyle{definition}
%\newtheorem{definition}[equation]{Definition}
%\theoremstyle{remark}
%\newtheorem{remark}[equation]{Remark}
%\newtheorem{example}[equation]{Example}
%\newtheorem{claim}[equation]{Claim}

\section{Introduction}

In this paper we study the Brauer class of the 
endomorphism algebra $X_f$ of the motive attached to a primitive 
elliptic modular cusp form $f$ without complex multiplication. Our study
includes the case of forms of weight 2, where the associated 
motive is an abelian variety. 

It is a fundamental fact that $X_f$ has a natural crossed product structure. 
This was proved by Ribet \cite{Ribet80} and Momose \cite{Momose81} 
in the case of weight 2, and extended to forms of higher weight
in \cite{Brown-Ghate03} subject to an injectivity constraint, which we  
remove here. It follows that $X_f$ is a central simple algebra over
a subfield $F$ of the Hecke field of $f$. Moreover $X_f$ is 2-torsion 
when considered as an element of the Brauer group of $F$. 
Thus $X_f$ is isomorphic to a matrix algebra over $F$, or 
a matrix algebra over a quaternion division algebra over $F$. 
Ribet has remarked in \cite{Ribet80} that it seems difficult
to distinguish these cases by pure thought. His remark pertains to the
case of weight 2, but is equally relevant in higher weight.
The chief motivation of this paper (and to a large extent \cite{Brown-Ghate03}) 
is to give as complete a picture as possible of the Brauer class of $X_f$. 

In recent years the notion of slope has played an important role in the 
theory of elliptic modular forms. For instance this notion is fundamental in 
parameterizing families of elliptic modular cusp forms, as in the work of 
Hida (slope 0) and Gouv$\hat{\mathrm e}$a, Mazur, Coleman (finite slope). 
Remarkably, the notion of slope turns out to be useful in studying the 
Brauer class of modular endomorphism algebras as well. In fact our main result 
is that at a finite place of $F$ not dividing the level of $f$ the ramification
of $X_f$ is essentially completely determined by the {\it parity} of the (normalized) slope 
of $f$ when this slope is finite. 

At places of $F$ dividing the level  
our knowledge of the ramification of $X_f$ is less complete. However we show that
it is still governed to some extent by the slopes
of $f$, at least at certain places where the underlying local 
representation is in the principal series. On the other hand at places for which
the local representation is of Steinberg type one knows quite a bit about
the ramification of $X_f$ (see \cite{Brown-Ghate03}). 
Predicting the ramification of $X_f$ at the remaining bad places 
(the supercuspidal places) is still an open problem.

We end this paper with tables of the Brauer 
class of $X_f$ for all forms $f$ of small weight and level 
with $F = \Q$.

\section{Statement of results}
  \label{statements}

We now give more precise statements of our results.
Let $f = \sum a_n q^n$ be a primitive cusp form of weight $k \geq 2$, 
level $N \geq 1$ and nebentypus $\epsilon$. Here primitive means that $f$ is 
a normalized newform that is a common eigenform of all the Hecke operators. 
Let $M_f$ denote the abelian variety associated to $f$ as constructed by Shimura when 
$k = 2$, and let $M_f$ denote the Grothendieck motive attached to $f$ constructed by Scholl 
in \cite{Scholl90} when $k > 2$.  
Let $\End(M_f)$ be the ring of endomorphisms of $M_f$ defined over $\Qbar$. When
$k > 2$ we work modulo cohomological equivalence. Set $X_f := \End(M_f) \otimes \Q$. 

The first result is that $X_f$ has a natural structure of a crossed
product algebra. To state this result more precisely we need some 
notation. Let $E = \Q(a_n)$ denote the Hecke field of $f$. 
Then $E$ is either a totally real or a 
CM number field. Assume from now on that $f$ does not have
complex multiplication. A pair $(\gamma, \chi_\gamma)$ where $\gamma \in \Aut(E)$ and
$\chi_\gamma$ is an $E$-valued Dirichlet character is said to be an extra
twist for $f$ if $a_p^\gamma = a_p \cdot \chi_\gamma(p)$ for all but finitely many
primes $p$. Let $\Gamma$ denote the set of $\gamma \in \Aut(E)$ 
such that $f$ has a twist by $(\gamma, \chi_\gamma)$ for some 
$E$-valued Dirichlet character $\chi_\gamma$. In turns out that
$\Gamma$ is an abelian subgroup of $\Aut(E)$.
For $\gamma$, $\delta \in \Gamma$ set
\begin{eqnarray}
  \label{Jacobi-sums}
  c(\gamma, \delta) = \frac{G(\chi_\gamma^{-1}) G(\chi_\delta^{-\gamma})}
                           {G(\chi_{\gamma \delta}^{-1})}
\end{eqnarray}
where $G(\chi)$ is the Gauss sum of the primitive Dirichlet character
associated to $\chi$. Then $c \in \mathrm{Z}^2(\Gamma, E^\times)$ is a
2-cocycle which turns out to be $E^\times$-valued. 
Let $X$ denote the crossed product algebra associated to $c$.
For the reader's convenience we recall the definition of $X$.
For each $\gamma \in \Gamma$ let $x_\gamma$ denote
a formal symbol. Then as an $E$-vector space $X$ is finite dimensional
with basis given by the symbols $x_\gamma$:  
\begin{eqnarray}
  X = \bigoplus_{\gamma \in \Gamma} E \> x_\gamma,
\end{eqnarray}
and as an algebra $X$ has structure given by the relations
\begin{eqnarray}
  x_\gamma \cdot e &  = & \gamma(e) \, x_\gamma \\
  x_\gamma \cdot x_\delta & = & c(\gamma, \delta) \, x_{\gamma \delta},  \nonumber
\end{eqnarray}
where $e \in E$ and $\gamma$, $\delta \in \Gamma$. 

If $k = 2$ then it is a result of Ribet \cite[Theorem 5.1]{Ribet80} 
and Momose \cite[Theorem 4.1]{Momose81} that $X_f$ is isomorphic to $X$.
On the other hand if $k > 2$ then $X_f$ contains a 
sub-algebra isomorphic to $X$  
(see \cite{Momose81} and \cite[Theorem 1.0.1]{Brown-Ghate03}).
Here, building on the above results, we prove the following.

\begin{thm}
  \label{structure}
  Let $k \geq 2$.
  Then $X_f$ is isomorphic to $X$. 
\end{thm}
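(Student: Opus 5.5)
Since $X_f$ already contains a subalgebra isomorphic to $X$ (by \cite{Momose81} and \cite[Theorem 1.0.1]{Brown-Ghate03}), it is enough to bound the dimension:
\[
\dim_\Q X_f \le [E:\Q]\cdot|\Gamma| = \dim_\Q X .
\]
I will do this by realizing $X_f$ inside the endomorphisms of an $\ell$-adic realization and computing the commutant there.

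First, $X_f$ is the algebra of $\Qbar$-endomorphisms modulo cohomological equivalence, so it acts faithfully on the $\ell$-adic realization $V_\ell = H_\ell(M_f)$. This is a free $E\otimes\Q_\ell$-module of rank $2$. The action is compatible with the Galois action of $G_K$ for some number field $K$ over which all endomorphisms are defined. Faithfulness gives an injection
\[
X_f\otimes\Q_\ell \hookrightarrow \End_{\Q_\ell[G_K]}(V_\ell).
\]
Decomposing along the places $\lambda\mid\ell$ of $E$, we get $V_\ell=\bigoplus_\lambda V_\lambda$, where each $V_\lambda$ is the $2$-dimensional $\lambda$-adic representation $\rho_{f,\lambda}$.

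Next I compute $\End_{G_K}(V_\ell)$. By Ribet's open image theorem for non-CM forms, in the form Momose proved for all weights, the image of $G_K$ is open in the group
\[
\{g\in\GL_2(E_\ell^{\Gamma}\otimes_F\cdots) : \det g\in\Q_\ell^{\times\,(k-1)}\}.
\]
This is the group of elements of $\GL_2(F\otimes\Q_\ell)$ with suitably restricted determinant. Here $F=E^\Gamma$, and we may shrink $K$ so that all the characters $\chi_\gamma$ are trivial on $G_K$. Consequently $\rho_{f,\lambda}\otimes_{E}{}^{\gamma}E\cong\rho_{f,\gamma\lambda}$ for $\gamma\in\Gamma$. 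Moreover the restrictions to $G_K$ are absolutely irreducible, and $\rho_{f,\lambda}$ and ${}^\sigma\rho_{f,\lambda}$ are inequivalent for embeddings $\sigma$ not coming from $\Gamma$.

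Viewing $V_\ell$ as $E\otimes\Q_\ell$-rank $2$ and applying Schur's lemma after extending scalars to $\Qbar_\ell$, $V_\ell\otimes\Qbar_\ell$ decomposes as a sum over embeddings $E\to\Qbar_\ell$. These summands fall into $[F:\Q]$ isomorphism classes, each occurring with multiplicity $|\Gamma|$, and each is absolutely irreducible of dimension $2$. Hence the commutant has dimension
\[
\dim_{\Q_\ell}\End_{G_K}(V_\ell) = [F:\Q]\cdot|\Gamma|^2 = [E:\Q]\cdot|\Gamma|,
\]
which gives the required bound.

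The main obstacle is the open image input and the multiplicity count in higher weight. Specifically, one must show that two $\lambda$-adic conjugates that become isomorphic over $G_K$ differ by an inner twist $\gamma\in\Gamma$. I would get this from the Chebotarev density theorem together with strong multiplicity one: if ${}^\sigma\rho\cong\rho$ on $G_K$, then ${}^\sigma\rho\cong\rho\otimes\chi$ over $G_\Q$ for some character $\chi$ of $\Gal(K/\Q)$. Absolute irreducibility and the fact that $\Gal(K/\Q)$ can be taken abelian make this character a Dirichlet character, so $\sigma\in\Gamma$.

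A secondary point is faithfulness of the realization on cohomological-equivalence classes in the Scholl motive case, which holds essentially by definition. This removes the injectivity hypothesis of \cite{Brown-Ghate03}: comparing dimensions then forces the inclusion $X\subset X_f$ to be an equality.
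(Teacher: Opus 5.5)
Your overall strategy is the paper's. You use $X\subset X_f$ by Momose and Brown--Ghate, together with $\dim_{\Q_\ell}\End_{G_K}(V_\ell)=[E:F][E:\Q]=\dim_\Q X$. The paper simply cites this dimension count as Ribet's Theorem 4.4, and your Schur-lemma multiplicity count is an acceptable sketch of it.

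The gap is the sentence ``Faithfulness gives an injection $X_f\otimes\Q_\ell\hookrightarrow\End_{\Q_\ell[G_K]}(V_\ell)$.'' Faithfulness only says that $X_f\to\End(V_\ell)$ is injective as a map of $\Q$-vector spaces, and this does not survive $\otimes_\Q\Q_\ell$. A $\Q$-subspace of a $\Q_\ell$-vector space can be $\Q$-linearly independent yet $\Q_\ell$-linearly dependent; for example $\Q+\Q\sqrt{2}\subset\Q_7$. A $\Q$-linear injection into a finite-dimensional $\Q_\ell$-space gives no bound at all on $\dim_\Q X_f$, so your dimension count collapses without this step. Moreover, the injectivity hypothesis of Brown--Ghate is precisely injectivity of $\alpha\colon\End(M_f)\otimes\Q_\ell\to\End_H(M_\ell)$ after extending scalars, not faithfulness over $\Q$. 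Your last paragraph therefore does not remove that hypothesis, and this injectivity is the entire new content of the paper's proof.

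What is missing is a $\Q$-structure on $\ell$-adic cohomology that contains the cycle classes. The paper gets it from Betti cohomology:
\begin{itemize}
\item Cohomological equivalence is defined through the Betti cycle class map $c_B$, so $Z_h(X\times X)$ is a $\Q$-subspace of the finite-dimensional $\Q$-space $\HH_B^{2d}(X\times X_{/\C})(d)$.
\item By flatness of $\Q_\ell$ over $\Q$, $Z_h\otimes\Q_\ell\hookrightarrow\HH_B^{2d}\otimes\Q_\ell$.
\item The comparison isomorphism $I_\ell$ satisfies $c_\ell=I_\ell\circ(c_B\otimes 1)$, so it identifies this target with $\HH_\ell^{2d}(X\times X)(d)$.
\item A formal projector argument transfers injectivity to $\End(M_f)\otimes\Q_\ell\hookrightarrow\End(M_\ell)$: if $Z$ commutes with $p$ and kills each $\HH^q_\ell(X)(p)$, then $Zp$ kills all of $\HH^q_\ell(X)$, hence $Zp=pZ=0$ in $Z_h\otimes\Q_\ell$.
\end{itemize}
Equivalently, you could run your argument through the Betti realization, where faithfulness really is by definition and the target is a $\Q$-space, and then compare with $\ell$-adic cohomology. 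With that step inserted, your proof goes through.
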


Let $F$ be the number field contained in $E$ which is the fixed field of $\Gamma$. 
%Theorem~\ref{structure} implies that 
Then $X$ is isomorphic to a central simple algebra over $F$
which is easily seen to be 2-torsion when considered as an element of the 
Brauer group of $F$. As a result $X$ is either a matrix algebra over $F$ or a matrix
algebra over a quaternion division algebra over $F$. We wish 
to distinguish these cases.

Recall that by global class field theory there is an injection 
\begin{eqnarray} 
  \Br(F) \hookrightarrow \oplus_v \Br(F_v)
\end{eqnarray} 
where $v$ runs through the places of $F$ and $F_v$ is the completion of $F$ at $v$.
Thus to study the Brauer class of $X$ it suffices to study its image 
$X_v = X \otimes_F F_v$ for each place $v$ of $F$ under the above map. 
Since $X$ is $2$-torsion in the Brauer group of $F$, the algebra $X_v$ is 
{\it a fortiori } either a matrix algebra over $F_v$ or a 
matrix algebra over a quaternion division algebra over $F_v$.
 
As far as the infinite places are concerned the field $F$ is easily seen to be totally real,
and it follows from a result of Momose \cite[Theorem 3.1 ii)]{Momose81} 
(see also \cite[Theorem 3.1.1]{Brown-Ghate03}) that $X$ is totally indefinite 
if $k$ is even or totally definite if $k$ is odd. 

Now suppose that $v$ is a finite place of $F$ of residue characteristic $p$ with $p$
coprime to $N$. Then $a_p^2 \epsilon(p)^{-1} \in F$. Set 
\begin{eqnarray}
  m_v & := & [F_v : \Q_p] \cdot v(a_p^2 \epsilon(p)^{-1}) \> \in \> \Z \cup \{\infty\}
\end{eqnarray}
where $v$ is normalized so that $v(p) = 1$. We shall show that the 
structure of $X_v$ is essentially determined by the {\it parity} of $m_v$ when it is finite. 
If $w$ is a place of $E$ lying over $v$ then $w(a_p) = \frac{1}{2}v(a_p^2 \epsilon(p)^{-1})$ is 
called the {\it slope} of $f$ at $p$ (with respect to $w$). Thus we show that there is a close 
connection between the ramification of $X_v$ away from the level and the parity of 
(normalized) slopes. More precisely we have the following theorem. 

\begin{thm}
  \label{parity} 
  Let $p$ be a prime such that
  \begin{itemize}
    \item $p$ does not divide $N$, and, 
    \item $p \neq 2$ if $F \neq \Q$.
  \end{itemize}
  Also assume $a_p \neq 0$. 
  Let $v$ be a place of $F$ lying over $p$. Then
  \begin{center} 
    $X_v$ is a matrix algebra over $F_v$ if and only if $m_v \in \Z$ is even,
  \end{center} 
  except possibly in the exceptional case that $p$ splits in all
  the quadratic fields cut out by the extra twists of $f$ 
  in which case $X_v$ is necessarily
  a matrix algebra over $F_v$. 
\end{thm}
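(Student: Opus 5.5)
The plan is to reduce to a local computation at $v$, where everything is controlled by the single element $t := a_p^2\epsilon(p)^{-1} \in F$.

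\emph{Step 1: pass to $X_v$.} By Theorem~\ref{structure} we may replace $X_f$ by the crossed product $X$ attached to the Gauss-sum cocycle $c$ of (\ref{Jacobi-sums}). Fix a place $w$ of $E$ over $v$ and let $D \subseteq \Gamma$ be its decomposition group. Then $X_v = X\otimes_F F_v$ is Brauer equivalent to the local crossed product $(E_w/F_v, D, c|_{D\times D})$.

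\emph{Step 2: trivialize $c$ up to a quadratic symbol.} This step uses $a_p \neq 0$ and $p \nmid N$. The twist relation $a_p^\gamma = \chi_\gamma(p)\,a_p$ shows that $a_p$ is a Kummer-type generator: the map $\gamma \mapsto \chi_\gamma(p)$ is governed by $a_p$. I will modify $c$ by the coboundary of $\gamma \mapsto G(\chi_\gamma^{-1})\cdot(\text{correction involving } a_p)$. The aim is to show that $c$ becomes cohomologous, in $\HH^2(D, E_w^\times)$, to the cup product of
\begin{itemize}
\item the class of $t$ in $F_v^\times/\Norm$, and
\item the quadratic character $\psi$ of $D$ defined by the extra twists, namely the characters $\chi_\gamma$ composed with Frobenius at $p$ (equivalently, the characters cutting out the quadratic fields $F(\sqrt{d})$ attached to the twists).
\end{itemize}
The intended output is
\[
[X_v] = (t, L/F_v) \in \Br(F_v)[2],
\]
where $L/F_v$ is the quadratic (or trivial) extension cut out by these quadratic characters.

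This is where the Gauss sums must be handled. The key facts are:
\begin{itemize}
\item Since $p \nmid N$, the conductors of the $\chi_\gamma$ are prime to $p$.
\item Hence $G(\chi_\gamma^{-1})$ generates, up to roots of unity, an ideal prime to $w$, because $|G|^2$ equals the conductor.
\item When $p$ is odd, or $F = \Q$, the relevant Gauss-sum quotients are local units at $w$ (possibly times roots of unity), so they are norms from unramified extensions.
\end{itemize}
I expect this step to be the main obstacle. It requires an explicit choice of cochain that kills the Gauss-sum contribution and leaves exactly the Hilbert symbol $(t, d)_v$. My first attempt will be to write $c$ as an inflation from the Galois group of the field cut out by the $\chi_\gamma$, and then to use Quer's description of $[X]$ as a product of quaternion symbols $\prod (t_i, d_i)$. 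In that description the $t_i$ are, at unramified $p$, given up to norms by $a_p^2\epsilon(p)^{-1}$.

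\emph{Step 3: evaluate the symbol.} Because $p \nmid N$ and $p \neq 2$ when $F \neq \Q$, every $d_i$ is a unit at $v$, so $L/F_v$ is unramified.
\begin{itemize}
\item If $p$ splits in all the quadratic fields of the extra twists, then $L = F_v$ and $X_v$ is split. This is the exceptional case of the statement.
\item Otherwise $L$ is the unramified quadratic extension, and $(t, L/F_v)$ is split if and only if $v_{F_v}(t)$ is even, where $v_{F_v}$ is the normalized valuation of $F_v$.
\end{itemize}
Finally, I will relate $v_{F_v}(t) = e\,v(t)$ to $m_v = e f\, v(t)$. Here I will need to check that $f$ is odd whenever $L \neq F_v$, using the fact that $F_v$ does not already contain the unramified quadratic extension. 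With this, the parity of $v_{F_v}(t)$ agrees with the parity of $m_v$, and the theorem follows.
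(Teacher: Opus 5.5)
\textbf{Gap in Step 2.} Your target formula $[X_v]=(z_p,L/F_v)$, with $z_p=a_p^2\epsilon(p)^{-1}$, is correct, and your Step 3 parity bookkeeping is fine: $f_v$ is odd when $L\neq F_v$, and $m_v$ is even when $f_v$ is even. The problem is that Step 2, the only non-formal step, is never carried out, and neither route you sketch gets there.

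Your primary route does not work as stated. The Gauss-sum cocycle $c$ of (\ref{Jacobi-sums}) does not involve $a_p$ at all. Its being $w$-unit-valued only shows that $X_v$ splits when $E_w/F_v$ is unramified, since a unit-valued cocycle for an unramified extension is a coboundary; this just recovers Corollary~\ref{unramified-quadratic}. In the case that matters, $E_w/F_v$ ramified, it gives no information. You also never specify the ``correction involving $a_p$'' that would make $z_p$ appear.

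Your fallback misdescribes Quer's formula (Theorem~\ref{symbols}). Its symbols are $(z_{n_\gamma},t_\gamma)$, where the $n_\gamma$ are auxiliary square-free integers subject to (\ref{n_gamma}). It is false that each $z_{n_\gamma}$ agrees with $z_p$ up to norms from $L$. If it were true, you would get $[X_v]=(z_p,L/F_v)^{|S|}$ with $S=\{\gamma\in\Gamma_0:\psi_\gamma(p)=-1\}$. That is wrong whenever $|S|$ is even and $m_v$ is odd.

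\textbf{What is needed.} The missing fact is that only the product $\prod_{\gamma\in S}z_{n_\gamma}$ is tied to $z_p$: it has the same valuation parity at $v$. The paper obtains this by a specific choice of the $n_\gamma$:
\begin{itemize}
\item take primes $r_0=p,r_1,\dots,r_{m-1}$ defined by the Chebotarev conditions (\ref{def-rj-one})--(\ref{def-rj-two});
\item ensure $a_{r_j}\neq 0$ using Serre's density theorem;
\item set $n_i=r_{i-1}r_i$ for $i<m$ and $n_m=r_{m-1}$.
\end{itemize}
Then $\sum_i v(z_{n_i})\equiv v(z_p)\pmod 2$ by telescoping, and $(z_{n_\gamma},t_\gamma)_v=(t_\gamma/p)^{f_v\,v(z_{n_\gamma})}$ finishes the argument.

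Alternatively, your ``Kummer-type generator'' remark can be made precise. Put $y_n=a_n/\sqrt{\epsilon(n)}$. Then $y_n^\gamma=\psi_\gamma(n)\,y_n$ for all $\gamma\in G_F$. Hence $z_n=y_n^2$, modulo $F^{\times2}$, is multiplicative and depends only on $(\psi_\gamma(n))_\gamma$. This gives $z_p\equiv\prod_{\gamma\in S}z_{n_\gamma}\bmod F^{\times 2}$ for any admissible choice of the $n_\gamma$. So when $f_v$ is odd, $[X_v]=(\prod_{\gamma\in S}z_{n_\gamma},L/F_v)=(z_p,L/F_v)$. One of these two ingredients has to replace your unspecified coboundary.

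\textbf{Minor point.} At $v\mid 2$, a $v$-unit $d$ need not give an unramified extension. What you actually use is $d\equiv 1\bmod 4$, which holds for odd fundamental discriminants.
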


A result of this kind was proved in \cite[Theorem 1.0.4]{Brown-Ghate03} for cusp 
forms having quadratic extra twists, or equivalently, real nebentypus character. 
That such a result might be true in general became clear after extensive
numerical computations were made by the second author. The proof of the general case 
combines ideas from \cite{Quer98} and \cite{Brown-Ghate03}. 

If $f$ is ordinary at $v$ (that is $v(a_p^2 \epsilon(p)^{-1}) = 0$) then $m_v = 0$, 
and it follows from the theorem that $X_v$ is a matrix algebra over $F_v$. 
This was already known even if $p = 2$ (see \cite[Theorem 6]{Ribet81} for $k = 2$ and 
\cite[Theorem 3.3.1]{Brown-Ghate03} for $k > 2$). Thus the theorem above
may be considered as a generalization of these results.   

For a more detailed explanation of the exceptional case mentioned in
the statement of the theorem the reader is referred to Theorem~\ref{supersingular} below. 
It is also possible to deal with the case $a_p = 0$ (and $p$ still coprime to $N$). In this case 
$m_v$ blows up but it may easily be substituted for by a closely related (finite) integer 
(see Proposition~\ref{a_p=0}). We point out here that the above mentioned results 
imply that $X$ can only be 
ramified at the primes dividing 
\begin{eqnarray}
  2 \cdot N \cdot \disc(E) \cdot \infty
\end{eqnarray}
where $\disc(E)$ is the discriminant of $E$ and $\infty$ is the unique infinite place of 
$\Q$ (Corollary~\ref{where-ramified}).  

The proof of the theorem above is based on an explicit computation of symbols
appearing in a formula for the Brauer class of $X$. A more conceptual approach, based on a 
study of the filtered $(\phi, N)$-modules of Fontaine attached to the local Galois 
representations associated to $f$, is available. This approach has so far been 
more successful for studying the ramification of $X$ only when 
the slope is small compared to the weight 
(see for instance \cite[Theorem 1.0.3]{Brown-Ghate03}). However, it can be 
applied to study the ramification of $X$ at some bad places $v | N$ 
(see \cite[Theorems 1.0.5 and 1.0.6]{Brown-Ghate03}). 
Here we push this approach further and prove the following result.

\begin{thm}
  \label{bad-singular-intro}
  Suppose that $p|N$ and that the power of $p$ dividing $N$ is the same
  as the power of $p$ dividing the conductor of $\epsilon$. 
  Let $v$ be a place of $F$ lying over $p$. Let $\alpha \in \Q$ be such that
  $0 \leq \alpha < (k-1)/2$
  and $\alpha$ has odd denominator. 
  If for each place $w$ of $E$ lying over $v$ either 
  \begin{eqnarray}
    w(a_p) = \alpha & \text{or} &  \bar{w}(a_p) = \alpha 
  \end{eqnarray}
  then $X_v$ is a matrix algebra over $F_v$.
\end{thm}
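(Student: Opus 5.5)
The plan is to use the filtered $(\phi,N)$-module approach of \cite{Brown-Ghate03}. We compute the class of $X_v$ via an $F_v$-linear representation of $X\otimes_F F_v$ on the weakly admissible module $D=D_{\crys}(\rho_{f,\lambda}|_{G_{\Q_{p^\infty}}})$ attached to a place $\lambda$ of $E$ above a prime $\ell\ne p$. Equivalently, we work with the $E_w$-linear module $D_{\mathrm{st}}$ over a field where $\rho$ becomes crystalline, with $\ell = p$.

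Since the $p$-part of $N$ equals the $p$-part of the conductor of $\epsilon$, the local representation $\pi_p$ is a ramified principal series $\pi(\mu_1,\mu_2)$ with $\mu_1$ unramified and $\mu_2$ of conductor $p^{\ord_p N}$. Hence $\rho_{f,w}|_{G_{\Q_p}}$ becomes crystalline over $\Q_p(\zeta_{p^r})$. On the two-dimensional $D_{\crys}$ over $E_w\otimes \Q_{p}$ the Frobenius $\phi$ has two eigenlines, with eigenvalues of valuations $w(a_p)$ and $k-1-w(a_p)$. The coefficient $a_p$ is one of the Frobenius eigenvalues, namely $\mu_1(p)p^{(k-1)/2}$ up to normalization, and the other is $\bar a_p\,\epsilon'(p)$. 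The Galois action of $\Gal(\Q_p(\zeta_{p^r})/\Q_p)$ acts on these lines through $1$ and $\epsilon_p$.

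The first step is to recall from \cite{Brown-Ghate03} (the proofs of Theorems 1.0.5 and 1.0.6) how the extra-twist operators $x_\gamma$ act $\gamma$-semilinearly on $D$. Concretely, each $x_\gamma$ gives an isomorphism $D\otimes_{E_w,\gamma} E_{\gamma w} \isoTo D_{\gamma w}\otimes\chi_\gamma$ of filtered $\phi$-modules with descent data. Using this, $X_v\otimes_{F_v} E_w$ acts on $\bigoplus_{w|v} D_w$, giving a representation of $X_v$ on a space whose $F_v$-dimension equals $\dim_{F_v}X_v$ divided by $\sqrt{\dim_{F_v} X_v}$, after splitting off the dimension-one $\phi$-eigenlines.

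The key step is to isolate a slope-$\alpha$ sub-object. The hypothesis says that for each $w|v$ one of the eigenvalues $a_p$ or $\bar a_p$ (these are the two Frobenius eigenvalues up to a unit twisted by $\epsilon$) has $w$-slope exactly $\alpha$. Since $\alpha<(k-1)/2$, the other slope $k-1-\alpha$ is different. So the $\phi$-slope-$\alpha$ part $D^{(\alpha)}=\bigoplus_w D_w^{(\alpha)}$ is a canonical rank-one $E_w$-line in each $D_w$. Because $\gamma$ preserves valuations up to permuting the $w$, and twisting by a finite-order character does not change slopes, $D^{(\alpha)}$ is stable under all $x_\gamma$. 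It is therefore an $X_v$-module of $F_v$-dimension $[E:F]$ with $E\otimes F_v$ acting freely of rank one.

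A module of this dimension over the central simple algebra $X_v$, of dimension $[E:F]^2$, forces $X_v$ to be split, by comparing with the dimension of a simple module. The main obstacle is making this eigenline decomposition rational over $F_v$ rather than over an extension, so that the slope decomposition really descends. Here the slope $\alpha$ must be recognised as a slope of an isocrystal over $\Q_p$, where slopes have the form $a/b$ with the multiplicity divisible by $b$. The odd-denominator hypothesis is used exactly at this point. Slope $\alpha=a/b$ with $b$ odd means the relevant isoclinic summand has $E_w$-rank one after extending by a degree-$b$ unramified extension. Since the index of $X_v$ divides $2$ and $\gcd(b,2)=1$, splitting over an odd-degree extension implies splitting over $F_v$. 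I would handle this by first base-changing to an unramified extension of odd degree $b$ to make $\phi^b/p^a$ have unit eigenvalues, applying the argument there, and then descending using the fact that an odd-degree extension is injective on the $2$-torsion of $\Br(F_v)$.
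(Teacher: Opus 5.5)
Your setup matches the paper's: potential crystallinity over $\Q_p(\mu_{p^r})$, and two distinct slopes $\alpha$ and $k-1-\alpha$ in each $D_w$. The gap is in your key step, where you assert that $D^{(\alpha)}=\bigoplus_{w|v}D_w^{(\alpha)}$ is an $X_v$-module of $F_v$-dimension $[E:F]$.

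\textbf{Why the key step fails.} The twisting operators satisfy $\sigma x_\gamma\sigma^{-1}=\chi_\gamma(\sigma)x_\gamma$. On $G_K$, $K=\Q_p(\mu_{p^r})$, the character $\chi_\gamma$ is unramified but in general nontrivial: Frobenius goes to the value at $p$ of the prime-to-$p$ part of $\chi_\gamma$. So, as you note yourself, $x_\gamma$ carries $D_{\crys}(\rho_f|_{G_K})$ into a $\chi_\gamma$-twist rather than into itself. The twist can only be undone after an unramified base change $\Q_{p^f}$ (ultimately $\Q_p^{\mathrm{un}}$), over which $X_v$ acts $\Q_{p^f}$-linearly. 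The slope-$\alpha$ part then has $F_v$-dimension $f[E:F]$, and for $f$ even this is perfectly compatible with $X_v$ being non-split.

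Your claim as stated would prove splitting for every $\alpha<(k-1)/2$ and never use the odd denominator. That hypothesis is not idle. Once $X_v$ acts on $\bar M_{\crys,v}^{(\alpha)}\cong C_\alpha^n$, a dimension count shows that $X_v$ is the full centralizer of $F_v$ in $\End(C_\alpha^n)=M_n(\End C_\alpha)$. Hence its invariant is $\pm[F_v:\Q_p]\,\alpha$, which equals $1/2$ whenever $F_v=\Q_p$ and $\alpha=1/2$ (permitted once $k\geq 3$).

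\textbf{Where the real obstacle lies.} Your last paragraph misplaces it. The eigenline decomposition is already defined over $E_w$: both eigenvalues $a_p$ and $\epsilon'(p)\bar a_p$ lie in $E$, and $\phi^b/p^a$ already has unit eigenvalue on $D_w^{(\alpha)}$ over $\Q_p$. What is not rational is the action of $X_v$. Base change to $\Q_{p^b}$ does not create that action, because the degree it needs is governed by the orders of the $\chi_\gamma(\Frob_p)$, not by $b$.

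The missing idea is to extend scalars all the way to $\Q_p^{\mathrm{un}}$, where $X_v$ acts commuting with $\phi$, and then use $\phi$ itself to cut out a rational object.
\begin{itemize}
\item The paper takes $V=\Hom(C_\alpha,\bar M_{\crys,v})$. Its $F_v$-dimension is $s[E:F]$, with $s=b$ the denominator of $\alpha$.
\item A non-split $X_v\cong M_m(Q)$ has every module of $F_v$-dimension divisible by $4m=2[E:F]$.
\item So $s$ odd forces $X_v$ to split.
\end{itemize}

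\textbf{How your $\phi^b/p^a$ idea can be rescued.} It gives an equivalent route if carried out over $\Q_p^{\mathrm{un}}$.
\begin{itemize}
\item Evaluating at a generator $e$ of $C_\alpha$ with $\phi^b e=p^a e$ identifies $V$ with $N=\{x\in\bar M_{\crys,v}:\phi^b x=p^a x\}$. This is a $\Q_{p^b}$-form of $\bar M_{\crys,v}^{(\alpha)}$, stable under $X_v$.
\item Decompose $N$ along $F_v\otimes_{\Q_p}\Q_{p^b}=\prod_j L_j$; the factors are permuted transitively by $\phi$.
\item This gives $X_v\otimes_{F_v}L_j$-modules of $L_j$-dimension $[E:F]$, so each $X_v\otimes_{F_v}L_j$ splits.
\item Each $[L_j:F_v]$ divides $b$ and is therefore odd.
\end{itemize}
Your odd-degree descent on $\Br(F_v)[2]$ then finishes the argument. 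As written, however, the proposal never passes to $\Q_p^{\mathrm{un}}$ or takes these fixed points, so the step where $b$ odd is supposed to enter does not actually go through.
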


If the power of $p$ dividing $N$ is larger
than the power of $p$ dividing the conductor of $\epsilon$,  our knowledge
of $X_v$ is less complete, except in the very special case  
when $p||N$ and the conductor of $\epsilon$ is prime to $p$ (the Steinberg case). 
In this last case it turns out that the ramification of $X_v$ is related to the parity
of the weight $k$ of $f$. For more precise statements see the end of section 3 
of \cite{Brown-Ghate03}.

\section{Crossed Product Structure}
  \label{section-structure}

In this section we prove that $X_f$ is isomorphic to
the crossed product algebra $X$ (Theorem~\ref{structure}).
In view of the work of Ribet and Momose we shall assume in 
this section that $k > 2$. 

Let $\ell$ be a prime and let $M_\ell$ denote the $\ell$-adic
realization of $M_f$. Recall that $M_\ell$ is a $\Q_\ell$-vector space 
with an action of $\Gal(\Qbar/\Q)$. For a subgroup
$H$ of $\Gal(\Qbar/\Q)$ let $\End_H(M_\ell)$ denote the endomorphisms
of $M_\ell$ which commute with $H$. An endomorphism of $M_f$
gives rise to an endomorphism of each of its realizations. One
therefore obtains a map 
\begin{eqnarray}
  \label{alpha-modular}
  \alpha : \End(M_f) \otimes \Q_\ell \rightarrow \End_H(M_\ell).
\end{eqnarray}
where $H$ is a sufficiently deep finite index subgroup of 
$\Gal(\Qbar/\Q)$. In \cite{Brown-Ghate03} it was shown that 
$X_f$ contains a sub-algebra generated by certain twisting
operators which is isomorphic to the crossed product algebra $X$. 
Moreover it was shown that if $\alpha$ is injective then $X_f$ is 
isomorphic to $X$ because of
dimension considerations. Indeed, by 
\cite[Theorem 4.4]{Ribet80} (whose proof carries over to the
case $k > 2$), the $\Q_\ell$-dimension of $\End_H(M_\ell)$ is $[E:F][E:\Q]$, 
which is also the $\Q_\ell$-dimension of $X \otimes \Q_\ell$.

So it suffices to prove that $\alpha$ is injective.
To do this we work slightly more generally. 
Let $X$ be a smooth irreducible projective variety over $\Q$ of dimension $d$. 
Let $Z(X \times X)$ be the rational vector space generated by the 
irreducible sub-varieties of $X \times X$ over $\Qbar$ of codimension $d$. 
Fix an embedding $\Qbar \hookrightarrow \C$. Let 
$\HH_B^{2d}(X\times X _{/\C})(d)$ denote Betti cohomology (with 
coefficients $(2 \pi i)^d \Q$) and let 
$c_B : Z(X \times X) \rightarrow \HH_B^{2d}(X\times X _{/\C})(d)$ 
be the cycle class map. 
Let $Z_h(X\times X)$ be the quotient
\begin{equation}
Z_h(X\times X) \> :=  \> Z(X\times X)/ \ker(c_B) \> = \> Z(X\times X)/\sim
\end{equation} 
where $\sim$ is the cohomological equivalence relation. 
Thus for $Z \in Z(X\times X)$ one has $Z \sim 0$ if and only if the 
image of $Z$ in $\HH_B^{2d}(X\times X _{/\C})(d)$ under $c_B$ is zero.
Recall that $Z_h(X\times X)$ has a natural ring structure where 
multiplication is induced by the composition product of correspondences. 
Let $p \in Z_h(X \times X)$ be a projector.
Let $M = (X, p)$ be a motive, where $X$ and $p$ are as above. 
Recall that by definition 
\begin{eqnarray}
  \End(M) := \frac{\{ Z  \in Z_h(X\times X) : Z\circ p = p \circ Z \}}
                    {\{ Z\in Z_h(X\times X) : Z\circ p = p \circ Z =0 \} }.
\end{eqnarray}
We show that in this setting the natural map
\begin{eqnarray}
  \label{alpha-general}
  \alpha : \End(M) \otimes \Q_\ell \rightarrow  \End(M_\ell) 
\end{eqnarray}
is injective. Note that $\alpha$ is equivariant for the action of $\Gal(\Qbar/\Q)$ on 
both sides.

Consider the cycle class map 
$c_\ell : Z(X \times X) \rightarrow  \HH_\ell^{2d}(X\times X)(d)$ 
to $\ell$-adic cohomology. There is a comparison isomorphism
\begin{equation}
I_\ell : \HH_B^{2d}(X\times X _{/\C})(d) \otimes \Q_\ell \> \isom \>  
           \HH_\ell^{2d}(X\times X _{/\C})(d) \> \isom \> \HH_\ell^{2d}(X\times X)(d)
\end{equation}
where the first isomorphism is (a twist of) the canonical comparison
isomorphism between Betti and $\ell$-adic cohomology for smooth
projective varieties over $\C$ (see \cite[Theorem 3.12]{Milne80}), and the second 
isomorphism is (again a twist of the one) induced by the embedding 
$\Qbar \hookrightarrow \C$ via the proper base change theorem.   
The two cycle class maps $c_B$ and $c_\ell$ are related via $I_\ell$, that is
$c_\ell =  I_\ell \circ (c_B \otimes 1)$ (see page 21 of \cite{Deligne82} or
page 58 of \cite{Jannsen90}).
It follows that the $\ell$-adic cycle class map factors through the 
cycles (Betti-) cohomologically equivalent to zero, inducing a map
$Z_h(X \times X) \hookrightarrow \HH_\ell^{2d}(X\times X)(d)$. (It also
follows that $Z_h(X \times X)$ and hence $\End(M)$ is defined independently
of the embedding $\Qbar \hookrightarrow \C$ fixed above). 
Since Betti cohomology gives a rational structure on $\ell$-adic cohomology, 
the induced map 
\begin{eqnarray}
  \label{cycle-class-map-for-XxX}
  Z_h(X \times X) \otimes \Q_\ell \hookrightarrow \HH_\ell^{2d}(X\times X)(d)
\end{eqnarray}
continues to be injective. This is the key observation that was missed
in \cite{Brown-Ghate03}. To deduce from this that the map 
$\alpha$ in (\ref{alpha-general}) is also injective is purely formal. We recall the 
argument here for the sake of completeness. By the K\"unneth formula %we have 
\begin{eqnarray}
  \HH_\ell^{2d}(X\times X)(d) 
  = \bigoplus_{q = 0}^{2d} \HH_\ell^{q}(X) \otimes \HH_\ell^{2d-q}(X)(d) 
  = \bigoplus_{q = 0}^{2d} \End(\HH_\ell^{q}(X))
\end{eqnarray}
where the last equality follows since $\HH_\ell^{2d-q}(X)(d)$ is 
dual to $\HH_\ell^{q}(X)$. Let 
\begin{eqnarray}
  \HH_\ell^{q}(X)(p) = \mathrm{Im}\left(p : \HH_\ell^{q}(X) \rightarrow \HH_\ell^{q}(X)\right).
\end{eqnarray} 
By definition $\End(M_\ell) = \oplus_{q = 0}^{2d} \End(\HH_\ell^{q}(X)(p))$. Now, the map
in (\ref{cycle-class-map-for-XxX}) induces a map
\begin{eqnarray}
  \label{cycle-class-map-p}
  \{ Z\in Z_h(X\times X) : Z\circ p = p \circ Z \} \otimes \Q_\ell
  \rightarrow \End(M_l)
\end{eqnarray}
Let $Z = Z \otimes 1 \in Z_h(X\times X) \otimes \Q_\ell$ 
belong to the kernel of (\ref{cycle-class-map-p}). 
Thus we have $Zp = pZ$ and $Z = 0$ in $\End(M_\ell)$. 
Write $Z = Zp +Z(1-p)$ and note that $Z(1-p) \circ p = 0$,
so $Z(1-p)$ acts as 0 on each $\HH_\ell^{q}(X)(p)$. Thus
$Zp = Z-Z(1-p)$ acts as 0 on each $\HH_\ell^{q}(X)(p)$. 
Since $Zp$ clearly acts as 0 on the spaces 
\begin{equation}
\ker(p : \HH_\ell^{q}(X) \rightarrow \HH_\ell^{q}(X))
\end{equation}
we see that
$Zp = 0$ in $\HH_\ell^{2d}(X\times X)(d)$. By the injectivity
of (\ref{cycle-class-map-for-XxX}) we see that $Zp = pZ = 0$ in 
$Z_h(X\times X)$. It follows that the 
kernel of the map (\ref{cycle-class-map-p}) is 
$\{ Z\in Z_h(X\times X) : Z\circ p = p \circ Z = 0\} \otimes \Q_\ell$.
Thus (\ref{cycle-class-map-p}) induces an injective map
$\End(M) \otimes \Q_\ell \hookrightarrow \End(M_\ell)$, which is
precisely the map $\alpha$ in (\ref{alpha-general}). 
This proves Theorem~\ref{structure}.

\section{Ramification and Slopes}
  \label{proof}

Let $X \isom X_f$ denote the endomorphism algebra of $M_f$. 
We now study the relation between the ramification of $X$ and the slopes of $f$.

Let $G = \Gal(\Qbar/\Q)$ and let $G_F = \Gal(\Qbar/F)$. We will sometimes
consider Dirichlet characters as characters of $G$. In particular
$\epsilon$ is a character of $G$. 
For each $g \in G$ let $\sqrt{\epsilon(g)}$ be a 
fixed square-root of $\epsilon(g)$. 
Now, for every extra twist  $(\gamma,\chi_\gamma)$ of $f$, and every pre-image 
of $\gamma$ in $G_F$, which we again denote by $\gamma$,
there is a unique primitive Dirichlet character $\psi_\gamma$ of order 1 or 2
such that $\chi_\gamma(g) = \psi_\gamma(g) \cdot \sqrt{\epsilon(g)}^{\gamma-1}$ for
all $g \in G$ 
(cf. \cite[Lemma 1.5]{Momose81} and \cite[Lemme 2]{Quer98}). 
Let $\Q(\sqrt{t_\gamma})$ be the quadratic field corresponding to $\psi_\gamma$ (in the case
that it has order 2). We assume that $t_\gamma$ is also the discriminant of 
this field.  The characters $\{ \psi_\gamma \divides \gamma \in G_F\}$
form an elementary 2-group. 
Fix once and for all a subset $\Gamma_0 \subset G_F$ such
that $\{ \psi_\gamma \divides \gamma \in \Gamma_0 \}$ is a basis for this group. 
For each $\gamma \in \Gamma_0$
choose square-free positive integers $n_\gamma$ prime to $N$ such that 
$a_{n_\gamma} \neq 0$, and such that for all $\gamma' \in \Gamma_0$,
\begin{eqnarray}
  \label{n_gamma}
  \psi_{{\gamma'}}(n_\gamma) = 
     \begin{cases} 
               -1  & \text{if $\gamma' = \gamma$},  \\
        \>\>\>  1  & \text{if not}.
     \end{cases}
\end{eqnarray}
Also, for a square free integer $n$ which is prime to $N$, set 
$z_n = a_n^2 \epsilon(n)^{-1} \in F$. 

Let $[c_\epsilon]$ denote the 
class of the cocycle $c_\epsilon \in \mZ^2(G_F, \pm1)$ 
defined by
\begin{equation}
c_\epsilon(g, h) = \sqrt{\epsilon(g)} \sqrt{\epsilon(h)}
                          {\sqrt{\epsilon(gh)}}^{-1},
\end{equation}
for $g, h \in G_F$ (see \cite[Section 2]{Quer98}). 

The following result expressing the Brauer class of $X$ in
terms of symbols was proved in \cite[Th\'eor\`eme 3]{Quer98} in 
the case $k = 2$. 

\begin{thm}
  \label{symbols}
  Let $k \geq 2$. Then
  \begin{eqnarray}
    X  & = & [c_\epsilon] \otimes \bigotimes_{\gamma \in \Gamma_0} \big(z_{n_\gamma}, \> t_\gamma \big)  
  \end{eqnarray}
  up to Brauer equivalence.
\end{thm}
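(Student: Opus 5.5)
The plan is to reduce to Quer's argument for $k=2$ (\cite[Th\'eor\`eme 3]{Quer98}) and check that it uses nothing specific to weight $2$. By Theorem~\ref{structure}, $X_f \isom X$, the crossed product of $E$ by $\Gamma$ with the Gauss-sum cocycle $c$ of (\ref{Jacobi-sums}). So the task is purely cohomological: compute the class of $c$ in $\HH^2(\Gamma, E^\times)$, or rather its image in $\Br(F)$, through inflation to $\HH^2(G_F,\Qbar^\times)$. The weight $k$ enters only through the coefficients $a_n$ and the twist data $(\gamma,\chi_\gamma)$. The Galois-theoretic facts Quer uses are available for all $k\geq 2$: the decomposition $\chi_\gamma = \psi_\gamma \sqrt{\epsilon}^{\gamma-1}$ (\cite[Lemma 1.5]{Momose81}), and the relation $a_p^\gamma = \chi_\gamma(p)a_p$ for $p\nmid N$.

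The key steps, in order, are as follows.

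First, lift $\Gamma$ to $G_F$ and inflate. In $\Br(F)$ the class of $X$ equals the class of the cocycle $(g,h)\mapsto c(\bar g,\bar h)$ on $G_F$ with values in $\Qbar^\times$.

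Second, alter this cocycle by coboundaries to separate the $\epsilon$-part from the $\psi$-part. The Gauss sum relation $G(\chi_\gamma^{-1})$ factors, up to coboundaries, through the Gauss sums of $\psi_\gamma$ and the multiplicative function $g\mapsto\sqrt{\epsilon(g)}$. This should give
\begin{equation*}
[c] = [c_\epsilon]\cdot[c_\psi],
\end{equation*}
where $c_\psi$ is built from the quadratic characters $\psi_\gamma$ and the values $G(\psi_\gamma)^2 = t_\gamma$.

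Third, express $[c_\psi]$ via the basis $\Gamma_0$. Use the elements $a_{n_\gamma}$: the twist relation gives
\begin{equation*}
a_{n_\gamma}^{\gamma'} = \chi_{\gamma'}(n_\gamma)\, a_{n_\gamma}.
\end{equation*}
Together with (\ref{n_gamma}), this means $\sqrt{z_{n_\gamma}} = a_{n_\gamma}/\sqrt{\epsilon(n_\gamma)}$ is fixed by every $\gamma'\in\Gamma_0$ except $\gamma$, which sends it to its negative. So $\sqrt{z_{n_\gamma}}$ cuts out the quadratic extension dual to $\psi_\gamma$. The cohomology class of the quadratic part is then a product of cup products of the characters $\psi_\gamma$ (corresponding to $t_\gamma$) with the Kummer characters of $z_{n_\gamma}$. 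These are exactly the quaternion symbols $(z_{n_\gamma},t_\gamma)$.

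The main obstacle is the second step. It requires tracking the choices of square roots $\sqrt{\epsilon(g)}$ and of lifts of $\gamma$, and verifying that the discrepancy cocycle is a coboundary. This is delicate bookkeeping, so I would simply follow Quer's computation line by line. The only point needing comment is that none of Quer's manipulations use weight $2$: they involve only Dirichlet characters, Gauss sums and the twist identities. I would also check that the hypothesis $a_{n_\gamma}\neq 0$ with $n_\gamma$ prime to $N$ is attainable for all $k$. This follows from Chebotarev density together with the non-CM hypothesis: $a_p\neq 0$ for a density-one set of primes, so suitable $n_\gamma$ exist.
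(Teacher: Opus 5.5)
Your outline can be completed, and by a route more elementary than the paper's, but the justification you give for step 2 is wrong.

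\textbf{Quer's argument is not weight-free.} It does not use only Dirichlet characters, Gauss sums and twist identities. It starts from Ribet's description \cite[Theorem 5.6]{Ribet92} of the class of $X$ by the cocycle $\tilde\alpha(g)\tilde\alpha(h)/\tilde\alpha(gh)$. Here $\tilde\alpha$ lifts the character $G\to E^\times/F^\times$ that Skolem--Noether produces from the Galois action on $X_f$. Quer then evaluates $c_d(g,h)=(\tilde\alpha(h)/\sqrt{\epsilon(h)})^{1-g}$. Relating $\tilde\alpha$ to the twists needs $\tilde\alpha(h)^{g-1}=\chi_g(h)$ \cite[Theorem 4]{Ribet81}, whose proof uses the Tate conjecture. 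For $k>2$ this is exactly the input the paper's proof supplies: the Tate conjecture for $M_f$ from \cite{Brown-Ghate03}, made unconditional by the injectivity of Section~\ref{section-structure}. So your remark that nothing uses $k=2$ skips the one point the paper actually has to check.

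\textbf{Your route avoids $\tilde\alpha$ if you do step 2 directly.} View Dirichlet characters as characters of $G$.
For $g\in G_F$ one has $g(G(\chi))=(g\circ\chi)(g)^{-1}G(g\circ\chi)$.
Put $\beta(g)=G(\chi_g^{-1})$ and $\lambda(g)=\chi_g(g)$, and use $\chi_{gh}=\chi_g\cdot(g\circ\chi_h)$.
This gives $c(g,h)=d(\beta\lambda)(g,h)\cdot\chi_g(h)$, which is \cite[Proposition 1]{Ribet81} and is purely formal.

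Pointwise $\chi_g(h)=\psi_g(h)\sqrt{\epsilon(h)}^{\,g-1}$.
With $s(h)=\sqrt{\epsilon(h)}$ one has $\sqrt{\epsilon(h)}^{\,g-1}=(ds)(g,h)\,c_\epsilon(g,h)^{-1}$.
So $g\mapsto\sqrt{\epsilon(g)}$ is multiplicative only up to $c_\epsilon$, and that is where $[c_\epsilon]$ comes from.
The other factor is the bilinear $\pm1$-cocycle $(g,h)\mapsto\psi_g(h)$, since $\psi_{gh}=\psi_g\psi_h$.
It is not a cocycle on $\Gamma$ built from the $G(\psi_\gamma)$: the Gauss sums have gone into the coboundary, and $\psi_g$ depends on $g$, not only on its image in $\Gamma$.

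\textbf{Step 3 must cover all of $G_F$.} You need your computation for every $g\in G_F$, not only the lifts in $\Gamma_0$. This includes $g\in G_E$, which can still move $\sqrt{\epsilon}$.
For squarefree $n$ prime to $N$ with $a_n\neq0$, one has $g(a_n/\sqrt{\epsilon(n)})=\psi_g(n)\,a_n/\sqrt{\epsilon(n)}$.
Write $\psi_g=\prod_{\gamma\in\Gamma_0}\psi_\gamma^{e_\gamma(g)}$.
Then condition (\ref{n_gamma}) says $(-1)^{e_\gamma}$ is the quadratic Kummer character $\kappa_{z_{n_\gamma}}$ of $z_{n_\gamma}$.
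Hence $(g,h)\mapsto\psi_g(h)$ represents $\sum_\gamma\kappa_{z_{n_\gamma}}\cup\kappa_{t_\gamma}$, that is, $\prod_\gamma(z_{n_\gamma},t_\gamma)$.

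\textbf{Comparison.} Done this way, your argument needs only Theorem~\ref{structure} and the twist identities, not the Tate conjecture. The paper's route instead lets it quote Quer's symbol computation verbatim.
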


\begin{proof}
We make some brief remarks which show that the proof given in \cite{Quer98} for the case 
$k = 2$ continues to hold if $k > 2$. 

Each $g \in G$ acts naturally 
as an automorphism on $X$ and this automorphism fixes $E$ since the elements 
of $E$ are basically Hecke operators and so are defined over $\Q$. 
By the Skolem-Noether theorem the action of $g$ on $X$ must be
given by inner conjugation by some element $e \in X$, which is
well defined modulo $F^\times$. Since $E$ 
is its own commutant in $X$ we see that $e \in E$. 
The association $g \mapsto e$ defines a continuous character 
$\alpha : G \rightarrow E^\times / F^\times$ where the
target has the discrete topology. Write $\tilde\alpha : G \rightarrow E^\times$
for any lift of $\alpha$. 

The first point to note is that the result \cite[Theorem 5.6]{Ribet92}, which says
that the class of $X$ in $\Br(F)$ is cut out by the 2-cocycle 
\begin{eqnarray}
  \label{alpha-cocycle}
  (g, h) \mapsto \frac{\tilde\alpha(g) \tilde\alpha(h)}{\tilde\alpha(gh)}
\end{eqnarray}
for $g, h \in G_F$, continues to hold if $k > 2$. 
Indeed, 
%by Theorem~\ref{structure}, 
the class of $X$ 
in $\Br(F)$ is the same as the class of the cocycle 
$c(g, h) \in \mZ^2(G_F, \bar{F}^\times)$, naturally obtained from the Jacobi sum 
cocycle $c(\gamma, \delta) \in \mZ^2(\Gamma, E^\times)$ in (\ref{Jacobi-sums}) 
by inflation. On the  other hand by \cite[Proposition 1]{Ribet81}, which is 
easily seen to hold for $k > 2$,
this last cocycle cuts out the same class in $\Br(F)$ as the cocycle 
\begin{eqnarray}
  \label{cocycle-chi}
  (g, h) \mapsto \chi_g(h)
\end{eqnarray} 
where $\chi_g = \chi_\gamma$ for the image $\gamma \in \Gamma$ of $g \in G_F$, and $\chi_g$ is thought of 
as a character of $G$. Now using
\cite[Theorem 4]{Ribet81}, which says that $\tilde\alpha(h)^{g - 1} = \chi_g(h)$, 
we see that the cocycle (\ref{cocycle-chi}) defines the same class in 
$\Br(F)$ as the cocycle $(g, h) \mapsto \tilde\alpha(h)^{g - 1}$. 
The proof of \cite[Theorem 4]{Ribet81} uses the Tate conjecture for the abelian
variety attached to $f$. This is proved in \cite[Corollary 1.0.2]{Brown-Ghate03}
for the motive $M_f$ when $k > 2$ subject to an injectivity hypothesis which 
we have removed in Section~\ref{section-structure}. Finally, 
this last cocycle differs from the cocycle (\ref{alpha-cocycle}) by the map 
\begin{eqnarray}
  (g, h) \mapsto \frac{\tilde{\alpha}(h)^g \tilde{\alpha}(g)}{\tilde{\alpha}(gh)}
\end{eqnarray}
which is a coboundary. 

The second point to note is that the cocycle 
(\ref{alpha-cocycle}) is equal to $c_\epsilon \cdot c_d$ up to a coboundary, 
where $c_d$ is the cocycle defined by 
\begin{equation}
c_d(g,h) = \left(\frac{\tilde\alpha(h)}{\sqrt{\epsilon(h)}}\right)^{1-g},
\end{equation}
for $g, h \in G_F$ (see the beginning of Section 2 of \cite{Quer98}). 
The rest of the proof of the theorem, which involves writing
the class of $c_d$ as a product of symbols, proceeds exactly
as in the case $k = 2$ without any change. This proves the theorem.
\end{proof}

\begin{remark} 
A formula similar to that appearing in the theorem above
was proved in \cite[Theorem 4.1.3]{Brown-Ghate03} in the 
case that all the $\chi_\gamma$ are quadratic characters.   
This formula was proved by directly computing 
quadratic Gauss sums. 
\end{remark}

We wish to evaluate the symbols that appear in the expression for
$X$ in Theorem~\ref{symbols}. To do this let us recall some general 
facts about symbols from \cite{Serre79}.
Let $F$ be an arbitrary number field. 
Let $v$ denote a place of $F$ which is either finite or infinite. 
Let $F_v$ denote the completion of 
$F$ at $v$. It is well known that 
\begin{eqnarray}
  \label{local-Brauer-group}
  \Br(F_v) & \isom & \Q/\Z
\end{eqnarray}
if $v$ is finite (and is $\Z/2$ if $v$ is infinite and real and is trivial
if $v$ is infinite and complex). 
Now let $a$ and $b$ be non-zero elements of $F$. Then the symbol
$(a, b)$ determines an element in $\Br(F)[2]$. 
For each finite place $v$ of $F$, let $(a, b)_v$ denote the induced element of 
$\Br(F_v)[2]$. By (\ref{local-Brauer-group}) the symbol
$(a, b)_v$ is completely specified by a sign $+1$ or $-1$. 
This sign can be computed in terms of the $v$-adic valuations of $a$ and $b$.
There are two cases, the tame case: $v \notdivides 2$, and 
the wild case: $v | 2$.

First assume that $v$ is prime to $2$. Fix a uniformizer 
$\pi_v$ of the ring of integers of $F_v$. Write
\begin{eqnarray}
  a & = & \pi_v^{v(a)} \cdot a' \\
  b & = & \pi_v^{v(b)} \cdot b'\nonumber
\end{eqnarray}
where we consider $v$ here to be normalized such that $v(\pi_v)=1$.
In this section $v$ will refer to a valuation which is normalized
in this way unless explicitly stated otherwise.
Then one has
\begin{eqnarray}
  \label{tame}
  (a, b)_v & = & (-1)^{\frac{Nv-1}{2} \> v(a)v(b)} 
                 %(-1)^{(1/2)(Nv-1) \cdot v(a)v(b)}  
                 \cdot \legendre{b'}{v}^{v(a)} 
                 \cdot \legendre{a'}{v}^{v(b)}. 
\end{eqnarray}
Here the symbol $\legendre{c}{v}$ takes the values $\pm1$ and
is $1$ exactly when the image of $c$ is a square in the residue
field at $v$. 

Now assume that $v | 2$. We shall only treat the case 
$F = \Q$ so that $v = 2$. For a unit $u \in \Q_2^\times$ let
$\varepsilon(u)$ denote the residue of $(u-1)/2$ in $\Z/2$ and let 
$\omega(u)$ denote the residue of $(u^2-1)/8$ in $\Z/2$. Then
for units $u$, $v$ in $\Q_2^\times$ we have 
\begin{eqnarray}
  \label{wild1}
  (u, v)_2 & = & (-1)^{\varepsilon(u) \varepsilon(v)}, \\
  \label{wild2}
  (2, u)_2 & = & (-1)^{\omega(u)}.
\end{eqnarray}
Note that these formulas completely determine $(a, b)_2$ for 
$a$, $b \in \Q_2^\times$.

Now let us return to our situation. Thus $F$ is the center of $X$ and 
contains $a_p^2 \epsilon(p)^{-1}$ for $p$ prime to $N$. 
The usual local-global exact sequence for the Brauer group of $F$ shows that the 
Brauer class of $X$ is completely determined by the Brauer classes of the $X_v$, which are in 
turn completely determined
by specifying a sign, one for each $v$. For notational convenience we write 
$X_v \sim a$ for an integer
$a$ if the sign of the Brauer class of $X_v$ is the same as
$(-1)^a$.

Recall that by a result of Momose \cite[Theorem 3.1]{Momose81} one knows $X_v \sim k$ if $v$ is 
infinite. On the other hand if
$v|p$ is a finite place of $F$ with $p \notdivides N$ we have $X_v \sim 0$ if $v$ is 
ordinary for $f$, that is if $v(a_p^2\epsilon(p)^{-1}) = 0$ (see \cite[Theorem 6]{Ribet81} for
the case $k = 2$ and \cite[Theorem 3.3.1]{Brown-Ghate03} for $k > 2$). 
The following theorem generalizes this result. 
To state it we introduce a positive integer $m_v$ for each place $v$ of $F$ of
residue characteristic $p \notdivides N$ with $a_p \neq 0$:
\begin{eqnarray}
  m_v := [F_v : \Q_p] \cdot v(a_p^2 \epsilon(p)^{-1}).
\end{eqnarray}
In the definition of $m_v$ we take the valuation $v$ which is normalized
such that $v(p)=1$.
Then we have the following theorem (it is a more precise version of Theorem~\ref{parity}):

\begin{thm} 
  \label{supersingular}
  Let $p$ be a prime such that $p \notdivides N$ and $a_p \neq 0$. 
  Let $v$ be a place of $F$ lying over $p$. 
  If $p \neq 2$ we have
  \begin{eqnarray}
    X_v \sim \begin{cases}
               0                 & \text{if $\psi_\gamma(p) = 1$ 
                                         for all $\gamma \in \Gamma_0$},            \\
              m_v                & \text{otherwise}.
             \end{cases}
  \end{eqnarray}
  If $p = 2$ then the same conclusion holds if $F = \Q$.
\end{thm}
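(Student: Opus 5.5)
We will compute the local invariant of each factor in Theorem~\ref{symbols} at $v$, exploiting the freedom in choosing the auxiliary integers $n_\gamma$. The conditions (\ref{n_gamma}) fix only the values $\psi_{\gamma'}(n_\gamma)$ and require $a_{n_\gamma}\neq 0$ and $(n_\gamma,N)=1$. These are congruence conditions modulo the conductors of the $\psi_{\gamma'}$, and the conductors divide $N$ (up to the factor coming from $\epsilon$). By Dirichlet's theorem, combined with the nonvanishing of $a_q$ for a density-one set of primes $q$ (Serre), we can therefore take each $n_\gamma$ to be either a prime $q\neq p$ or a product $pq$, according to what is convenient. The class $[c_\epsilon]$ has trivial local component at $v$, since $\epsilon$ is unramified at $p\notdivides N$; this uses the description of $c_\epsilon$ in \cite{Quer98}, where its ramification lies only above primes dividing the conductor of $\epsilon$ and above infinity. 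So only the symbols $(z_{n_\gamma},t_\gamma)_v$ remain.

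\textbf{Case 1: $\psi_\gamma(p)=1$ for all $\gamma\in\Gamma_0$.} Choose every $n_\gamma=q_\gamma$ prime, with $q_\gamma\neq p$ and $q_\gamma\notdivides N$. Then $z_{q_\gamma}=a_{q_\gamma}^2\epsilon(q_\gamma)^{-1}$, and it is a $v$-adic unit as long as $a_{q_\gamma}$ is prime to $p$. By Chebotarev applied to the mod $\ell$ Galois representation with $\ell=p$, a positive density of primes $q$ have $a_q\not\equiv 0$ modulo every prime of $E$ above $p$, so we can impose this. Since $p\notdivides t_\gamma$ as well, the tame formula (\ref{tame}) gives $(z,t_\gamma)_v=1$ for $p$ odd. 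For $p=2$ and $F=\Q$, the fact that $\psi_\gamma(2)=1$ means $t_\gamma\equiv 1\pmod 8$, so $t_\gamma$ is a square in $\Q_2$ and the symbol is trivial. Hence $X_v\sim 0$.

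\textbf{Case 2: some $\psi_\gamma(p)=-1$.} Change the basis $\Gamma_0$ so that exactly one element $\gamma_1$ has $\psi_{\gamma_1}(p)=-1$, by multiplying the other basis characters by $\psi_{\gamma_1}$. The product formula is insensitive to this change, because Theorem~\ref{symbols} holds for any basis. For $\gamma\neq\gamma_1$, choose $n_\gamma$ prime with unit $z$ as in Case~1; these symbols are trivial. For $\gamma_1$, take $n_{\gamma_1}=p$ itself. This satisfies (\ref{n_gamma}) and $a_p\neq 0$, and gives $z_p=a_p^2\epsilon(p)^{-1}$. Because $\psi_{\gamma_1}(p)=-1$, $p$ is inert in $\Q(\sqrt{t_{\gamma_1}})$, so $t_{\gamma_1}$ is a $p$-adic unit and a nonsquare mod $p$. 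By (\ref{tame}),
\begin{equation*}
(z_p,t_{\gamma_1})_v=\legendre{t_{\gamma_1}}{v}^{v(z_p)},
\end{equation*}
where $v$ is now normalized by $v(\pi_v)=1$. The residue field of $v$ is $\F_{p^{f}}$, and $t_{\gamma_1}\in\F_p$ is a nonsquare there exactly when $f$ is odd. So the sign is $(-1)^{f\cdot v(z_p)}$. Since $e_v\, v_p(z_p)=v(z_p)$ and $m_v=e_vf_v\,v_p(z_p)$, we get $f\cdot v(z_p)=m_v$, and hence $X_v\sim m_v$. For $p=2$ and $F=\Q$, $\psi_{\gamma_1}(2)=-1$ means $t_{\gamma_1}\equiv 5\pmod 8$. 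Writing $z_2=2^{m}u$, (\ref{wild1}) gives $(u,t)_2=1$ because $\varepsilon(t)=0$, and (\ref{wild2}) gives $(2,t)_2^m=(-1)^m$, so again $X_v\sim m_v$.

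The main obstacle is justifying the choices of the $n_\gamma$: simultaneously meeting the congruence conditions, $a_{n}\neq 0$, and $p$-adic unitness of $z_n$. I would handle this with a Chebotarev argument applied jointly to the characters $\psi_{\gamma'}$ and the residual representation at $p$, using that $f$ is non-CM and has infinitely many nonzero coefficients. A secondary point is confirming that $[c_\epsilon]$ is locally trivial at $v\notdivides N$. I would argue this from its definition as an obstruction attached to $\sqrt{\epsilon}$, which splits over an extension unramified outside the conductor of $\epsilon$.
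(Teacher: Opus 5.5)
Your main line is correct, and it uses the freedom in Theorem~\ref{symbols} differently from the paper.

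\textbf{Comparison with the paper.} The paper keeps the fixed basis $\Gamma_0$ and lets $t_1,\dots,t_m$ be the $t_\gamma$ with $\legendre{t_\gamma}{p}=-1$. It then builds a chain of auxiliary primes $r_0=p,r_1,\dots,r_{m-1}$ with $a_{r_j}\neq 0$ and linked Legendre-symbol conditions, sets $n_i=r_{i-1}r_i$ for $i<m$ and $n_m=r_{m-1}$, and obtains $\sum_i f_v\,v(z_{n_i})\equiv f_v\,v(z_p)\pmod 2$ by telescoping.

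You instead re-choose the basis so that only $\psi_{\gamma_1}$ is $-1$ at $p$. This is legitimate, since the $\psi_\delta$ form an elementary $2$-group and $\Gamma_0$ was an arbitrary lift of a basis. Note that you should multiply by $\psi_{\gamma_1}$ only those other basis characters that are $-1$ at $p$. Then $n_{\gamma_1}=p$ is itself admissible, so a single symbol carries the invariant and no chain or telescoping is needed. Your route is shorter; the paper's has the mild advantage of never changing $\Gamma_0$.

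Your evaluation of $(z_p,t_{\gamma_1})_v$ is right, including $f_v\cdot v(z_p)=m_v$ and the wild computation at $p=2$. Your remark on $[c_\epsilon]$ is also fine once phrased locally: $\epsilon|_{G_{F_v}}$ is unramified, so it has an unramified square root and $c_\epsilon$ becomes a coboundary on $G_{F_v}$.

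\textbf{The step you call the main obstacle is false in general, and also unnecessary.} You cannot always choose admissible $n_\gamma$ with $z_{n_\gamma}$ a $v$-adic unit. Suppose $\delta\in G_F$ lies in an inertia group at a prime $\mathfrak{P}$ above $v$ and $\psi_\delta(q)=-1$. Reduce $a_q^\delta=\chi_\delta(q)a_q$ modulo $\mathfrak{P}$, using $\chi_\delta(q)\equiv\psi_\delta(q)$ because $\delta$ fixes roots of unity modulo $\mathfrak{P}$. This gives $2a_q\equiv 0$, so $a_q\in\mathfrak{P}$ when $p$ is odd.

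For a concrete case, take a form with trivial character, $E=\Q(\sqrt d)$ with $d$ squarefree, and a quadratic inner twist $\psi$. Every admissible $n$ has $\psi(n)=-1$, so $a_n\in\Q\sqrt d$ and $z_n=a_n^2$ has odd valuation at each odd $p\mid d$. If $\psi(p)=1$ you are in your Case~1, and the prime $q$ you ask for does not exist, whatever Chebotarev argument is used.

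\textbf{The fix.} Simply drop the unit requirement. If $\psi_\gamma(p)=1$, then $t_\gamma$ is a $p$-adic unit that is a square mod $p$, hence a square in $\Q_p\subset F_v$. For $p=2$ and $F=\Q$ this is exactly your $t_\gamma\equiv 1\pmod 8$. So by (\ref{tame}), $(z,t_\gamma)_v=\legendre{t_\gamma}{v}^{v(z)}=1$ for every $z\in F_v^\times$. This is precisely how the paper discards the $t_\gamma\notin S^-$.

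With this change, Case~1 and the $\gamma\neq\gamma_1$ factors in Case~2 need only primes $n_\gamma$ satisfying (\ref{n_gamma}) with $a_{n_\gamma}\neq 0$. Dirichlet's theorem together with Serre's density result supplies these.
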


\begin{proof} 
The proof is similar to the proof of \cite[Theorem 4.1.11]{Brown-Ghate03}
the main difference being that we use Theorem~\ref{symbols} instead of 
\cite[Theorem 4.1.3]{Brown-Ghate03} to compute $X$ locally. 

Note $[c_\epsilon]_v = 1$ if and only if the local component
$\epsilon_p$ of $\epsilon$ is even. Since $p$ is
prime to $N$, $\epsilon_p$ is in fact trivial, so that $[c_\epsilon]_v = 1$.  
It follows from Theorem~\ref{symbols} that
\begin{eqnarray}
 \label{local-symbols}
 X_v  & = & \bigotimes_{\gamma \in \Gamma_0} \> \big(z_{n_\gamma}, \> t_\gamma \big)_v.
\end{eqnarray}  
 
Since $v$ is prime to $N$ we have 
$v(t_\gamma) = 0$. First assume that $p \neq 2$. Then $v$ is prime to $2$ so
that by (\ref{tame}) we have 
$\big(z_{n_\gamma}, \> t_\gamma \big)_v = \legendre{t_\gamma}{v}^{v(z_{n_\gamma})}$.
But $\legendre{t_\gamma}{v} =  \legendre{t_\gamma}{p}^{f_v}$ since every
element of $\F_p$ has a square root over a quadratic extension of $\F_p$. 
We conclude that
\begin{eqnarray}
  \label{q-symbol}
  \big(z_{n_\gamma}, \> t_\gamma \big)_v & = & \legendre{t_\gamma}{p}^{f_v \cdot v(z_{n_\gamma})}.
\end{eqnarray}  
Thus if $\psi_\gamma(p) = \legendre{t_\gamma}{p} = 1$ for all $\gamma \in \Gamma_0$ 
then $X_v = 1$ as desired. 

Suppose on the other hand that the subset $S^-$ of the set $\{t_\gamma \divides \gamma \in \Gamma_0\}$ 
consisting of those $t_\gamma$ for which $\legendre{t_\gamma}{p} = -1$ is non-empty. 
Write the elements of $S^-$ as $t_1$, $t_2$, \ldots, $t_m$ with $m \geq 1$. 
Define distinct primes $r_j$ for $j = 0, 1, \ldots, r_{m-1}$ as follows:
set $r_0 = p$ and define $r_j$ for $j = 1, \ldots, m-1$ recursively by 
\begin{eqnarray}
  \label{def-rj-one}
  \legendre{t_i}{r_j} & = &
      (-1)^{\delta_{ij}} \cdot \legendre{t_i}{r_{j-1}} 
      \> \text{ for all } i = 1, \ldots, m, \text{ and},  \\
  \label{def-rj-two}
  \legendre{t_\gamma}{r_j}   & =  & 1 \> \text{ if } t_\gamma \notin S^-. 
\end{eqnarray}
We may and do assume that each $a_{r_j} \neq 0$. This can
be done for $j = 0$ since $a_p \neq 0$ by hypothesis.
For the other $r_j$'s we simply note that if 
$a_{r_j} = 0$ for all $r_j$ defined by the congruence 
conditions (\ref{def-rj-one}) and (\ref{def-rj-two}) then the set of 
primes $p$ for which $a_p = 0$ would have a positive density contradicting 
Serre \cite[Theorem 15]{Serre81}.

Corresponding to $t_i \in S^-$ set
\begin{eqnarray}
  \label{n_i}
  n_i = \begin{cases}
          r_{i-1} \cdot {r_i} & \text{if $1 \leq i \leq m-1$}, \\
          r_{m-1}             & \text{if $i = m$}.
        \end{cases}
\end{eqnarray}  
Clearly the $n_i$ are square free positive integers prime to the level
satisfying $a_{n_i} \neq 0$ since Fourier coefficients are multiplicative on 
distinct primes and the $r_j$ were chosen so that
each $a_{r_j} \neq 0$. Furthermore the $n_{i}$ satisfy the 
congruence conditions (\ref{n_gamma}). Indeed
suppose that $t_i$ corresponds to $\gamma \in \Gamma_0$. 
Assume first that $i < m$. Then 
\begin{eqnarray}
  \psi_\gamma(n_i) =  \psi_\gamma(r_{i-1}) \psi_\gamma(r_i) = - 1
\end{eqnarray}
since, by (\ref{def-rj-one}), $\psi_\gamma(r_{i-1})$ 
and $\psi_\gamma(r_i)$ differ by a sign. 
Similarly if $\gamma'$ corresponds to $t_j$ for $j \neq i$ then 
$\psi_{\gamma'}(r_{i-1}) = \psi_{\gamma'}(r_i)$ by (\ref{def-rj-one}) again
so that $\psi_{\gamma'}(n_i) = 1$. Finally if $\gamma'$ corresponds
to some $t_{\gamma'} \not\in S^-$ then by (\ref{def-rj-two}) $\psi_{\gamma'}(n_i) = 1$.
Now assume that $i = m$. Then for any $\gamma' \in \Gamma_0$
\begin{eqnarray}
  \psi_{\gamma'}(n_{m}) = \psi_{\gamma'}(r_{m-1}) = 
  \legendre{t_{\gamma'}}{r_{m-1}}.
\end{eqnarray}
But (\ref{def-rj-one}) shows that 
\begin{eqnarray}
  \legendre{t_i}{r_{m-1}} =
  \begin{cases}
    -     \legendre{t_i}{p} = 1   & \text{if $i \leq m-1$}, \\
    \quad \legendre{t_i}{p} = - 1 & \text{if $i = m$}. 
  \end{cases}
\end{eqnarray}
So this along with (\ref{def-rj-two}) shows that $\psi_{\gamma'}(n_m) = -1$ if and only if 
$\gamma' = \gamma$ as desired. 

We are now ready to begin computing our symbols. 
Only those $\gamma$ for which $t_i \in S^-$ contribute to the sign of 
$X_v$ in (\ref{local-symbols}) since if 
$t_\gamma \not\in S^-$ then $\legendre{t_\gamma}{v} =  \legendre{t_\gamma}{p} = 1$ and
the corresponding local symbol is trivial by (\ref{q-symbol}). 
Now for $t_i \in S^-$ we have $z_{n_i} = a_{n_i}^2 \epsilon(n_i)^{-1}$ and 
$\legendre{t_i}{p} = -1$ so that 
by (\ref{q-symbol}) we have
\begin{eqnarray}
  \big(z_{n_i}, \> t_i \big)_v & \sim & f_v \cdot v(a_{n_{i}}^2 \epsilon(n_i)^{-1}). 
\end{eqnarray}
Substituting for $n_{i}$ from (\ref{n_i}) above and multiplying over 
all $i$ in $\{1, \ldots, m \}$, there is a mod 2 telescoping effect, the result of 
which is  
\begin{eqnarray}
  X_v \sim f_v \cdot v(a_p^2 \epsilon(p)^{-1}). 
\end{eqnarray}
If we take the $v(p)=1$ normalization for $v$ then the right hand side
becomes $m_v$, proving the theorem in the case $p \neq 2$.

Now assume that $p = 2$ and that $F = \Q$. 
Write $v_2(z_{n_\gamma})$ for the power of $2$ that divides $z_{n_\gamma}$ and
define $z'_{n_\gamma}$ by $z_{n_\gamma} = 2^{v_2(z_{n_\gamma})} \cdot z'_{n_\gamma}$. 
We have
\begin{eqnarray}
  (z_{n_\gamma}, t_\gamma)_2 = (2, t_\gamma)_2^{v_2(z_{n_\gamma})} 
                                   \cdot (z'_{n_\gamma}, t_\gamma)_2.
\end{eqnarray}
Since $N$ is prime to $p=2$ by hypothesis $t_\gamma$ must be odd. One can easily check that 
$(2, t_\gamma)_2$ is equal to $(-1)^{\omega(t_\gamma)}$ by
(\ref{wild2})
which may again be easily checked to be the same as $\legendre{t_\gamma}{2}$ using
the fact $2$ splits in $\Q(\sqrt{t_\gamma})$ if and only if $t_\gamma \equiv 1 \mod 8$.
On the other hand $(z'_{n_\gamma}, t_\gamma)_2 = 1$ by (\ref{wild1}) since 
$\varepsilon(t_\gamma) \equiv 0 \mod 2$. Thus
\begin{eqnarray}
  (z_{n_\gamma}, t_\gamma)_2 = \legendre{t_\gamma}{2}^{v_2(z_{n_\gamma})}.
\end{eqnarray}
Now the argument proceeds as in the case $p \neq 2$ proving the theorem
in this case as well.
\end{proof}

\begin{remark}
  The assumption that $F = \Q$ when $p = 2$ could
  probably be removed if one had formulas for wild symbols
  other than in the case $F = \Q$.
\end{remark}

As in \cite{Brown-Ghate03} it is possible to treat the 
case $a_p = 0$ (and $p$ still prime to $N$) with minimal effort. 
The structure of $X_v$ is not determined
by the parity of $m_v$ since $m_v = \infty$. Thus the notion
of slope is not useful in measuring the ramification in this case.   
However as we now show the structure of $X_v$ is still
determined by the $v$-adic valuation of a Fourier coefficient at a prime 
$p^\dagger$, closely related to $p$. 

In fact we take $p^\dagger$ to be any prime such that 
$p p^\dagger \equiv 1 \mod N$ and such that $a_{p^\dagger} \neq 0$. 
Serre's result, quoted above, guarantees that one can always 
find such a $p^\dagger$. Set
\begin{eqnarray}
  m^\dagger_v := [F_v : \Q_p] \cdot v(a_{p^\dagger}^2 \epsilon(p^\dagger)).
\end{eqnarray} 
Theorem~\ref{supersingular} now has the following avatar when $a_p = 0$.

\begin{prop} 
  \label{a_p=0}
  Let $v$ be a place of $F$ of residue characteristic
  $p$ prime to $N$ and assume $a_p = 0$. Let $m^\dagger_v$ be as above.
  If $p \neq 2$ we have
  \begin{eqnarray}
    X_v \sim \begin{cases}
               0                 & \text{if $\psi_\gamma(p) = 1$ 
                                         for all $\gamma \in \Gamma_0$}, \\ 
              m_v^\dagger        & \text{otherwise}.
             \end{cases}
  \end{eqnarray}
  If $p = 2$ then the same conclusion holds if $F = \Q$.
\end{prop}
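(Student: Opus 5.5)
Our plan is to rerun the proof of Theorem~\ref{supersingular}, changing only the choice of the integers $n_\gamma$: we replace the chain of primes starting at $r_0 = p$ with one starting at $r_0 = p^\dagger$. By Theorem~\ref{symbols}, and since $[c_\epsilon]_v = 1$ because $p \nmid N$, we again have
$X_v = \bigotimes_{\gamma \in \Gamma_0} (z_{n_\gamma}, t_\gamma)_v$.
Moreover, this holds for \emph{any} admissible choice of square-free $n_\gamma$ prime to $N$ with $a_{n_\gamma} \neq 0$ satisfying (\ref{n_gamma}). As before, (\ref{q-symbol}) (or its $2$-adic analogue when $F = \Q$) gives
$(z_{n_\gamma}, t_\gamma)_v = \legendre{t_\gamma}{p}^{f_v \cdot v(z_{n_\gamma})}$.
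If $\psi_\gamma(p) = 1$ for all $\gamma$, every symbol is trivial and $X_v \sim 0$, with no use of $a_p$. This settles the first case.

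Now suppose the set $S^- = \{t_1, \ldots, t_m\}$ of $t_\gamma$ with $\legendre{t_\gamma}{p} = -1$ is nonempty. The key observation is that $p^\dagger$ lies in the same classes as $p$ for every character $\psi_\gamma$. Indeed, $\psi_\gamma$ has conductor dividing $N$ (its field is unramified outside $N$) and order at most $2$. Since $p p^\dagger \equiv 1 \bmod N$, we get $\psi_\gamma(p^\dagger) = \psi_\gamma(p)^{-1} = \psi_\gamma(p)$. Hence $\legendre{t_\gamma}{p^\dagger} = \legendre{t_\gamma}{p}$ for all $\gamma \in \Gamma_0$.

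We therefore set $r_0 = p^\dagger$ (allowed, since $a_{p^\dagger} \neq 0$) and define $r_1, \ldots, r_{m-1}$ by the congruence conditions (\ref{def-rj-one}), (\ref{def-rj-two}). We take them distinct from $p$ and $p^\dagger$, prime to $N$, with $a_{r_j} \neq 0$, using Serre's density theorem as before. The $n_i$ are then defined by (\ref{n_i}). The verification that they satisfy (\ref{n_gamma}) is word for word the same, because it only used $\legendre{t_i}{r_0} = \legendre{t_i}{p}$, which we just established. None of the $n_i$ is divisible by $p$.

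Since each $t_i \in S^-$ has $\legendre{t_i}{p} = -1$, we get
$(z_{n_i}, t_i)_v \sim f_v \cdot v(a_{n_i}^2 \epsilon(n_i)^{-1})$.
The mod $2$ telescoping over $i = 1, \ldots, m$ leaves only the $r_0$ term, so
$X_v \sim f_v \cdot v(a_{p^\dagger}^2 \epsilon(p^\dagger)^{-1})$.
Finally, $\epsilon(p^\dagger)$ is a root of unity and hence a $v$-unit, so its sign in the exponent is irrelevant. Passing to the normalization $v(p) = 1$ gives $m_v^\dagger$.

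The case $p = 2$, $F = \Q$ goes through in the same way using the wild formulas (\ref{wild1}) and (\ref{wild2}), exactly as in the proof of Theorem~\ref{supersingular}. The main obstacle is the observation above that $\psi_\gamma(p^\dagger) = \psi_\gamma(p)$, which needs the conductor of each $\psi_\gamma$ to divide $N$. We would justify this from the fact that $\chi_\gamma$ and $\epsilon$ are unramified outside $N$, so the same holds for $\psi_\gamma$.
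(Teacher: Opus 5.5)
Your proposal is correct and is the paper's own argument: rerun the proof of Theorem~\ref{supersingular} with the chain of primes starting at $r_0 = p^\dagger$. The identity $\psi_\gamma(p^\dagger) = \psi_\gamma(p)$ is used to verify (\ref{n_gamma}), while the local symbols are still evaluated at $v \mid p$, so the telescoping leaves $f_v \cdot v(a_{p^\dagger}^2 \epsilon(p^\dagger)^{-1})$.

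One small correction concerns your justification of $\psi_\gamma(p^\dagger) = \psi_\gamma(p)$. Being unramified outside $N$ only controls which primes divide the conductor of $\psi_\gamma$, not their exponents. Instead use the standard fact that the conductor of $\chi_\gamma$ divides $N$, together with the observation that $g \mapsto \sqrt{\epsilon(g)}^{\gamma-1}$ depends only on $\epsilon(g)$ and hence factors through $(\Z/C)^\times$.
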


\begin{proof} 
  Since $p p^\dagger \equiv 1 \mod N$ we have
  \begin{eqnarray}
    \legendre{t_\gamma}{p} = \legendre{t_\gamma}{p^\dagger}
  \end{eqnarray}
  so that the proof of Theorem~\ref{supersingular} goes
  through replacing $p$ with $p^\dagger$. 
\end{proof}

Let us record the following easy consequences of the above results.

\begin{cor}
  \label{unramified-quadratic}
  Let $v$ be a place of $F$ of residue characteristic $p$ with 
  $p \notdivides 2N$. If $v$ is unramified in $E$ then $X_v$ is a
  matrix algebra over $F_v$.
\end{cor}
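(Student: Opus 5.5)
We want to show $X_v \sim 0$ whenever $p \nmid 2N$ and $v$ is unramified in $E$. The plan is to use Theorem~\ref{supersingular} when $a_p \neq 0$ and Proposition~\ref{a_p=0} when $a_p = 0$. In both cases $X_v \sim 0$ if all $\psi_\gamma(p)=1$. Otherwise $X_v \sim m_v$, respectively $X_v \sim m_v^\dagger$, and it remains to show that this integer is even.

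Take $a_p \neq 0$ first. Set $z = a_p^2\epsilon(p)^{-1} \in F$. Choose a place $w$ of $E$ over $v$, and write $v(\cdot)$ for the valuation normalized by $v(p)=1$. Then $m_v = e_v f_v\, v(z)$, where $e_v, f_v$ are the ramification index and residue degree of $F_v/\Q_p$. Since $\epsilon(p)$ is a root of unity, $v(z) = 2\,w(a_p)$ with the same normalization. Rewriting with the normalized valuation $\mathrm{ord}_w$ of $E_w$ (so $\mathrm{ord}_w(p)=e_w$) gives
\[
m_v = e_v f_v \cdot \frac{2\,\mathrm{ord}_w(a_p)}{e_w} = \frac{2\, f_v\, \mathrm{ord}_w(a_p)}{e(w|v)} .
\]
The hypothesis that $v$ is unramified in $E$ means $e(w|v)=1$, so $m_v = 2 f_v\,\mathrm{ord}_w(a_p)$ is even. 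Equivalently, the $v$-normalized valuation of $z$ equals $2\,\mathrm{ord}_w(a_p)$, and the telescoping formula $X_v \sim f_v\, v(z)$ at the end of the proof of Theorem~\ref{supersingular} then gives the parity directly.

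The case $a_p=0$ works the same way. Now $z^\dagger = a_{p^\dagger}^2\epsilon(p^\dagger)^{\pm1}$, which lies in $F$ because $p^\dagger \nmid N$. Again $v(z^\dagger) = 2\,w(a_{p^\dagger})$ with $a_{p^\dagger}\in E$, and unramifiedness of $w|v$ makes $m_v^\dagger$ even.

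The hypothesis $p \neq 2$ is exactly what lets us apply Theorem~\ref{supersingular} and Proposition~\ref{a_p=0} for general $F$, and $p\nmid N$ is also required by both. The only point needing care is the bookkeeping of normalizations, namely that $[F_v:\Q_p]$ cancels the passage from $\mathrm{ord}_v$ to the $v(p)=1$ normalization, leaving a factor $1/e(w|v)$. There is no real obstacle beyond this.
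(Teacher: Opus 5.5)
Your proposal is correct and is essentially the paper's own argument. You apply Theorem~\ref{supersingular} (or Proposition~\ref{a_p=0} when $a_p=0$) and observe that $m_v$ (resp.\ $m_v^\dagger$) is even because $v$ is unramified in $E$; your identity $m_v = 2 f_v\,\mathrm{ord}_w(a_p)/e(w|v)$ just makes explicit the evenness that the paper calls ``necessarily'' true.
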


\begin{proof}
This is immediate from Theorem~\ref{supersingular} and 
Proposition~\ref{a_p=0} since in this case the integer
$m_v$ or $m^\dagger_v$ is necessarily even. It may also
be proved directly by studying the formula in 
Theorem~\ref{symbols}. Indeed if $v \notdivides 2N$ is a finite prime of $F$
for which $X_v$ is ramified then the normalized $v$-adic valuation
of at least one of the entries in the symbols appearing in that theorem 
must be odd. Since $t_\gamma | N$ for all $\gamma \in \Gamma$ the only 
possibility is that the normalized valuation $v(z_{q})$ must be odd for 
some prime $q$. One checks easily then that $v$ ramifies in $F(a_q)$ so that 
$v$ ramifies in $E$.   
\end{proof}

\begin{cor} 
  \label{where-ramified}
  If $X$ is ramified at $v$ then $v$ must divide either the 
  discriminant of the field $E$, or $2N$, or $\infty$.
\end{cor}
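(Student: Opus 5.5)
We prove Corollary~\ref{where-ramified} by going through the places $v$ of $F$ according to their residue characteristic and showing that $X_v$ is unramified outside $2 \cdot N \cdot \disc(E) \cdot \infty$. We may discard the infinite places, since they are allowed in the conclusion.

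Let $v$ be a finite place of $F$ lying over a prime $p$. If $p \divides 2N$, then $v$ divides $2N$ and there is nothing to prove. So assume $p \notdivides 2N$ and that $v$ does not divide $\disc(E)$. We will show that $X_v$ is a matrix algebra over $F_v$.

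Since $v$ does not divide $\disc(E)$, no place of $F$ above $p$ can ramify in $E$. Indeed, a ramified place $w \mid v$ in $E/F$ would make $p$ ramify in $E$, and then $p \mid \disc(E)$. So $v$ is unramified in $E$. Corollary~\ref{unramified-quadratic} now applies directly, because $p \notdivides 2N$ and $v$ is unramified in $E$, and it gives $X_v \sim 0$.

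It is worth recording where this comes from, in case the reduction needs to be made explicit. If $a_p \neq 0$, Theorem~\ref{supersingular} gives $X_v \sim 0$ or $X_v \sim m_v$. Write $e_v$ for the ramification index of $v$ over $p$, so that $m_v = [F_v:\Q_p] \cdot v(a_p^2\epsilon(p)^{-1})$ with $v(p) = 1$ equals $f_v$ times the valuation of $a_p^2 \epsilon(p)^{-1}$ in the normalization $v(\pi_v) = 1$. That element is the square of $a_p \cdot \sqrt{\epsilon(p)}^{-1}$, which lies in $E$ because $\epsilon$ takes values in $E$ and $E \ni a_p^2/\epsilon(p)$ forces the root to lie in an extension of degree at most $2$ inside $E$. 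When $v$ is unramified in $E$, a uniformizer of $F_v$ stays a uniformizer in $E_w$. Hence this valuation is $2w(a_p\sqrt{\epsilon(p)}^{-1})$, which is even. If $a_p = 0$, the same argument applied to $m_v^\dagger$ through Proposition~\ref{a_p=0} gives the same conclusion.

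The main subtle point is this parity assertion, that is, that $a_p^2\epsilon(p)^{-1}$ is a square in $E$ up to units. If this causes trouble, the fallback is the direct argument sketched in the proof of Corollary~\ref{unramified-quadratic}. By Theorem~\ref{symbols} the local class at $v$ is a product of tame symbols $(z_q,t_\gamma)_v$, and $[c_\epsilon]_v$ is trivial since $\epsilon_p$ is trivial. Since $v(t_\gamma) = 0$, nontriviality forces $v(z_q)$ to be odd for some prime $q$. Then $v$ ramifies in $F(\sqrt{z_q}) = F(a_q/\sqrt{\epsilon(q)}) \subseteq E$, so $p \mid \disc(E)$. Either way, $X$ is unramified at every $v$ outside $2N\,\disc(E)\,\infty$, which proves the corollary.
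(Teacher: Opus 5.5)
Your main line is the paper's proof. If $p \nmid 2N$ and $p \nmid \disc(E)$, then $v$ is unramified in $E$, so Corollary~\ref{unramified-quadratic} makes $X_v$ split.

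One correction to the supplementary remarks, though it does not affect that argument. It is not true in general that $a_p\sqrt{\epsilon(p)}^{-1} \in E$, nor that $F(\sqrt{z_q}) \subseteq E$. For example, for $44A_{[1,5]}$ the character $\epsilon$ is quadratic and $E=\Q(\sqrt{2},\sqrt{-3})$ does not contain $\sqrt{-1}$. So for any $p$ with $\epsilon(p)=-1$ and $a_p \neq 0$, the element $a_p/\sqrt{\epsilon(p)} = \pm a_p/\sqrt{-1}$ is not in $E$.

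No square root is needed. Since $\epsilon(p)$ is a root of unity, it is a unit at every $w \mid v$ of $E$. If $w/v$ is unramified, then $v(z_p) = w(z_p) = 2w(a_p)$, which is even. Likewise, in your fallback an odd $v(z_q)$ forces $v$ to ramify in $F(a_q) \subseteq E$, not in $F(\sqrt{z_q})$; this is exactly how the paper argues in the proof of Corollary~\ref{unramified-quadratic}.
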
 

\begin{proof} 
This is immediate from the previous corollary. 
\end{proof}

\section{Bad Places}

In the previous section we showed how the ramification of $X$ 
at the good places ($v | p \notdivides N$) is essentially
determined by the normalized slopes of $f$ at $p$. 
In this section we continue our investigation of the
ramification of $X_v$ at the bad places ($v|p|N$) that was 
begun in \cite{Brown-Ghate03}.  We recall some notation 
introduced in  that paper.

Assume now that $p|N$. Let $N_p$ be the exponent of the 
exact power of $p$ that divides $N$. 
Let $C$ denote the conductor of $\epsilon$ and let $C_p$ denote the exponent of 
the exact power of $p$ that divides $C_p$. Note $N_p \geq C_p$ and $N_p \geq 1$. 
We consider three cases:
\begin{enumerate}
  \item $N_p = C_p$: in which case $|a_p| = p^{(k-1)/2}$ (Ramified principal series),
  \item $N_p = 1$ and $C_p = 0$: in which case $a_p^2 = \epsilon(p) p^{k-2}$ (Steinberg),
  \item $N_p \neq C_p$ and $N_p \geq 2$ in which case $a_p = 0$ (Other).
\end{enumerate}

In the second case the local factor at $p$ in the automorphic representation
corresponding to $f$ is the Steinberg representation or a twist of it
by an unramified character. This case is treated 
in \cite[Theorem 3]{Ribet81} in the case $k = 2$, and 
in \cite{Brown-Ghate03} for higher weight forms  (see in particular
Theorems 3.4.6 and 3.4.8 in \cite{Brown-Ghate03}). Almost nothing
is known in the third case, where $a_p = 0$. This case
includes twists of previous cases and also cases where the
local automorphic representation is supercuspidal. Here we will be 
concerned with the first case, in which the local automorphic 
representation is in the ramified principal series. The following theorem
contains Theorems 3.4.1, 3.4.2 in \cite{Brown-Ghate03} as
special cases, and was stated without proof 
as \cite[Theorem 3.4.4]{Brown-Ghate03}.

\begin{thm}
  \label{bad-singular}
  Suppose $p|N$ with $N_p = C_p$ and let $v$ be a
  place of $F$ lying over $p$. Let $\alpha \in \Q$ be such that
  \begin{eqnarray}
  0 \leq \alpha < (k-1)/2
  \end{eqnarray}
  and $\alpha$ has odd denominator. 
  If for each place $w$ of $E$ lying over $v$ either $w(a_p) = \alpha$
  or $\bar{w}(a_p) = \alpha$ then $X_v$ is a matrix algebra over $F_v$.
\end{thm}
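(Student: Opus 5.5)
We plan to use the Fontaine-theoretic approach mentioned in the introduction rather than the symbol formula of Theorem~\ref{symbols}. Theorem~\ref{symbols} involves $z_n$ only for $n$ prime to $N$, so it gives no direct handle on $v$-adic behaviour at $p \mid N$. Since $N_p = C_p$, the local representation at $p$ is a ramified principal series $\pi(\mu_1,\mu_2)$ with $\mu_1$ unramified and $\mu_2$ ramified. The $p$-adic Galois representation $\rho_{f,\lambda}|_{G_{\Q_p}}$ for $\lambda\mid p$ is therefore potentially crystalline: it becomes crystalline over $K=\Q_p(\zeta_{p^{C_p}})$, an abelian totally ramified extension. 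Its filtered $(\phi,\Gal(K/\Q_p))$-module $D$ is two dimensional over $E_\lambda\otimes K_0=E_\lambda$. The crystalline Frobenius has eigenvalues $a_p$ and $\bar a_p=\epsilon'(p)p^{k-1}/a_p$, where $\epsilon'$ is the prime-to-$p$ part of $\epsilon$. Inertia acts through the characters $1$ and $\epsilon_p$ on the two eigenlines, and the Hodge filtration has jumps $0$ and $k-1$.

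\textbf{Step 1.} We show that the endomorphism algebra $X_v = X\otimes_F F_v$ acts on $D$. Using Theorem~\ref{structure} and the $\ell=p$ realization map $\alpha$ (now injective), each twisting operator $x_\gamma$ gives an isomorphism $\rho_f \simeq \rho_f^{\gamma}\otimes\chi_\gamma$ over a finite index subgroup $H$. Passing to $D_{\crys}$ over a large enough extension, we obtain an action of $X\otimes_{\Q}\Q_p=\prod_{v'\mid p} X_{v'}$ on $\bigoplus_{\lambda} D_\lambda$. This action is semilinear for $E$ and commutes with $\phi^{f}$ and with the filtration.

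We then restrict to the $v$-component. Here $D_v := \bigoplus_{w\mid v}D_w$ is a module over $X_v$ of $F_v$-dimension $2[E_w{:}F_v]\cdot\#\{w\}$. Writing $X_v\simeq M_r(B)$ with $B$ either $F_v$ or the quaternion division algebra, the $\phi$-module structure (a suitable power of $\phi$ is $F_v$-linear) gives an $F_v$-linear operator commuting with $B$.

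\textbf{Step 2.} We run the slope argument. Consider the Dieudonn\'e--Manin slope decomposition of $D_v$, viewed as a $\phi$-module over $F_v$ (after restriction of scalars to $\Q_p$). The hypothesis that $w(a_p)=\alpha$ or $\bar w(a_p)=\alpha$ for every $w\mid v$ implies that exactly half of $D_v$ has slope $\alpha$ and half has slope $k-1-\alpha$. Since $\alpha<(k-1)/2$, these two slopes are distinct. Each isoclinic part is therefore $X_v$-stable, so each is a $B$-module. The slope-$\alpha$ part $D_v^{\alpha}$ has $F_v$-dimension $[E:F]\cdot[F_v{:}\Q_p]$-compatible size, equal to half of the total, which is $r\cdot\dim_{F_v}B\cdot(\ldots)$.

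The key point is as follows. An isoclinic $F$-isocrystal of slope $\alpha=a/b$ in lowest terms has endomorphism algebra of invariant $\pm a/b$. Commutation of $B$ with Frobenius forces $\dim$ constraints. If $B$ were a quaternion division algebra, then $D_v^\alpha$ would be a $B\otimes(\text{isocrystal})$ module of the wrong parity: the count of simple isocrystal pieces would be $\dim/(2b)$ with $b$ odd. We aim to show that this forces the half-dimension to be odd where it must be even. Equivalently, $\mathrm{inv}(B)$ must equal a multiple of the invariant $a/b$ class times the number of copies, and since $b$ is odd, $1/2$ cannot arise.

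\textbf{Main obstacle.} We expect the hardest part to be this invariant bookkeeping, namely making precise that the centralizer of $X_v$ acting on $D_v^{\alpha}$, together with the isotypic decomposition, yields $\mathrm{inv}_v(X_v)\in (1/b)\Z/\Z$ multiplied by an integer. We also need to handle the semilinearity coming from $\Gal(K/\Q_p)$ and $E$ carefully, so that the $X_v$-action genuinely commutes with an $F_v$-linear power of $\phi$. This is the step where oddness of the denominator is used. The condition $\alpha<(k-1)/2$ only serves to separate slopes; note that $\alpha=0$ recovers the ordinary-type case.
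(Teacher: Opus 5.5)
Your overall plan (Fontaine theory at $p$, two distinct Frobenius slopes $\alpha$ and $k-1-\alpha$ of equal multiplicity, odd denominator as the obstruction) is the paper's. However, there is a genuine gap, both in the setting of Step~1 and at the step you flag as the main obstacle.

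In Step~1 you treat $D_v=\bigoplus_{w\mid v}D_w$ as a $\Q_p$-space (you take $K_0=\Q_p$) on which $X_v$ acts commuting with a $\Q_p$-linear power of $\phi$. That cannot be correct. The slope-$\alpha$ summand would then be $X_v$-stable of $F_v$-dimension $[E:F]$. But a nonsplit $X_v\cong M_r(B)$ (so $[E:F]=2r$) has simple modules of $F_v$-dimension $4r=2[E:F]$. You would therefore get $X_v$ split for every $\alpha<(k-1)/2$ without using the odd denominator. The identical argument at a good prime would contradict, e.g., $8B^5_{[1,1]}$ at $p=5$, which has slopes $1/2$ and $7/2$ and $X_5$ ramified.

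What goes wrong is this. $x_\gamma$ intertwines $\rho_f$ with a twist by $\chi_\gamma$, and $\chi_\gamma|_{G_K}$ is unramified but in general nontrivial, so $x_\gamma$ does not preserve $D_{\crys}$ over $K$. The twisting operators act only after an unramified base change, i.e.\ $\Q_p^{\mathrm{un}}$-linearly on $\bar M=M_{\crys,v}\otimes_{\Q_p}\Q_p^{\mathrm{un}}$, commuting with the $\sigma$-semilinear $\phi$. There $X_v\otimes_{\Q_p}\Q_p^{\mathrm{un}}$ is split, so dimension counts of $X_v$-stable pieces say nothing. Moreover, only in this semilinear setting do isoclinic pieces have endomorphism algebras of invariant $\pm a/b$; a linear operator on a $\Q_p$-space has no such commutant.

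The missing step is to extract a $\Q_p$-structure from the semilinear data. Your parity heuristic (``$\dim/(2b)$ simple pieces'', ``half-dimension forced odd'') does not do this: the half-dimension $[E:F][F_v:\Q_p]$ has no forced parity.

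The paper writes $\bar M\cong C_\alpha^n\times C_{k-1-\alpha}^n$ with $\dim C_\alpha=s$ the denominator of $\alpha$, so $sn=[E:F][F_v:\Q_p]$. It then uses $V=\Hom(C_\alpha,\bar M)\cong\End(C_\alpha)^n$. This is a $\Q_p$-vector space and a left $X_v$-module with $\dim_{F_v}V=s^2n/[F_v:\Q_p]=s[E:F]$. That is not divisible by $2[E:F]$ when $s$ is odd, so $X_v$ splits.

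Your invariant formulation can also be completed. $X_v$ embeds in $\End(C_\alpha^n)\cong M_n(\End(C_\alpha))$, and the same dimension count shows it is the full centralizer of $F_v$ there. Hence $\mathrm{inv}(X_v)\equiv\pm[F_v:\Q_p]\alpha\bmod\Z$, which has odd order and, being $2$-torsion, is $0$. Either computation is what your proposal needs and does not supply.
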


\begin{proof}
  Let $M_{\mathrm{crys}, v}$ denote the crystal attached to $f$ and $v$. 
  Let us recall the definition. Fix a place $w|v$ of $E$. 
  The local Galois representation $\rho_f |_{G_p} : G_p \rightarrow \GL_2(E_w)$
  is potentially crystalline. In fact if $K = \Q(\mu_{p^{r}})$ where $r = N_p = C_p$ 
  then $\rho_f |_{G_K}$ is unramified. Let 
  $D_w = D_\mathrm{st}(\rho_f |_{G_K})$ be the associated filtered module. 
  It is a free module of rank 2 over $E_w$. Set $M_{\mathrm{crys}, v} = \oplus_{w|v} D_w$.
  This has dimension $2 [E:F] [F_v:\Q_p]$ over $\Q_p$. A study of the crystals
  $D_w$ now show that the crystalline 
  Frobenius $\phi : M_{\mathrm{crys}, v} \rightarrow M_{\mathrm{crys}, v}$ 
  has characteristic polynomial
  \begin{eqnarray}
    H(x) = \prod_{w|v} \Norm_{E_w/\Q_p}\left((x-a_p)(x-\epsilon'(p) \bar{a}_p)\right) 
  \end{eqnarray}
  where $\epsilon'$ is the prime-to-$p$ part of $\epsilon$. By hypothesis
  the Newton polygon of $H(x)$ has two distinct slopes, 
  namely $\alpha$ and $k-1-\alpha$
  each occurring with equal multiplicity, say $n$.
  Let $\bar{M}_{\mathrm{crys}, v} = M_{\mathrm{crys}, v} \otimes_{\Q_p} \Q_p^\mathrm{un}$.
  It follows that $\bar{M}_{\mathrm{crys}, v} \isom C_\alpha^n \times C_{k-1-\alpha}^n$ where
  $C_\alpha$ and $C_{k-1-\alpha}$ are the simple crystals over $\Q_p^\mathrm{un}$ 
  of slopes $\alpha$ and $k-1-\alpha$. Now 
  $\dim_{\Q_p^\mathrm{un}}  C_\alpha = \dim_{\Q_p^\mathrm{un}}  C_{k-1-\alpha} = s$ 
  where $\alpha = r/s$ as a fraction in lowest terms. It follows that 
  \begin{equation}
2 [E:F] [F_v:\Q_p] = \dim_{\Q_p^\mathrm{un}}  \bar{M}_{\mathrm{crys}, v}  = 2 s n.
\end{equation} 
  Now let $V = \Hom(C_\alpha, \bar{M}_{\mathrm{crys}, v}) = \Hom(C_\alpha, C_\alpha)^n$. 
  This is a left $X_v$-module of dimension
  \begin{eqnarray}
     \dim_{F_v} V = \frac{s^2 n}{[F_v:\Q_p]} = s [E:F]
  \end{eqnarray}
  Since $s$ is odd, it follows from the representation theory
  of the algebra $X_v$ that $X_v$ must split. 
\end{proof}

\section{Tables of QM Modular Motives}

In this section we give complete tables of the endomorphism algebras of
all modular motives of small weight ($2 \leq k \leq 5$) and small 
level ($1 \leq N \leq 100$) with $F = \Q$. %(For $k = 5$ see the version of this paper on the first author's web page). 
An entry appears in the tables below only if the corresponding motive has quaternionic 
multiplication (QM for short), that is, only if the class of $X$ is non-zero in the 
Brauer group of $\Q$. 

Recall that twisting a form by a Dirichlet character
does not change the Brauer class of $X$ (see 
\cite[Proposition 3]{Ribet81} for the weight 2 case;
the proof there works in all weights). 
In the interest of conserving space we do not list those 
entries that are obtained from forms of smaller level by 
twisting. 

That $X_f$ can have non-trivial Brauer class
was discovered by Shimura: the example on page 166
of \cite{Shimura72} appears as the fifth entry in Table 1
below.
 
\begin{small}

\setlongtables
\begin{longtable}{lccccc}
\caption[Modular QM-abelian varieties of level $\leq 100$]{Modular QM-abelian varieties of level $\leq 100$}\\
\toprule
Label & ord($\epsilon$) & $E$ & Extra twists & Ramification & Slope\\[.1mm] 
\midrule
\endfirsthead
\toprule
Label & ord($\epsilon$) & $E$ & Extra twists & Ramification & Slope\\[.1mm] 
\midrule 
\endhead
\midrule
\endfoot
\bottomrule
\endlastfoot

$28A_{[ 1, 1 ]}$  & 6 &  $\Q(\sqrt{ -1}, \sqrt{ 3 })$ & $[1,0],[1,5]$ & $\begin{array}{c} 2 \\ 3\end{array}$ & $\begin{array}{c} RPS \\ 1\end{array}$ \\[.1mm]
\hline
$35A_{[ 1, 3 ]}$ & 4&  $\Q(\sqrt{ 10}, \sqrt{ -1 })$& $[0,3],[3,3]$ & $\begin{array}{c} 2 \\ 5\end{array}$ & $\begin{array}{c} 1 \\ RPS\end{array}$ \\[.1mm]
\hline
$44A_{[ 1, 5 ]}$ & 2 & $\Q(\sqrt{ 2},\sqrt{ -3 })$& $[1,5],[1,0]$ & $\begin{array}{c} 2 \\ 3\end{array}$ & $\begin{array}{c} RPS \\ 1\end{array}$ \\[.1mm]
\hline
$56B_{[ 1, 1, 3 ]}$ & 2 & $\Q(\sqrt{ -1},\sqrt{ 6 })$ & $[1,1,0],[1,1,3]$& $\begin{array}{c} 2 \\ 3\end{array}$ & $\begin{array}{c} RPS \\ 1\end{array}$ \\[.1mm]
\hline
$57A_{[ 1, 9 ]}$ & 2 & $\Q(\sqrt{ 2},\sqrt{ -5 })$& $[1,9],[0,9]$ & $\begin{array}{c} 2 \\ 5\end{array}$ & $\begin{array}{c} 1 \\ 1\end{array}$ \\[.1mm]
\hline
$60A_{[ 0, 1, 1 ]}$ & 4 & $\Q(\sqrt{ 5},\sqrt{ -1 })$ & $[0,0,3],[0,1,0]$& $\begin{array}{c} 2 \\ 5\end{array}$ & $\begin{array}{c} a_p=0 \\ RPS\end{array}$ \\[.1mm]
\hline
$63A_{[ 3, 1 ]}$ &  6 &$\Q(\sqrt{ -2},\sqrt{ 6 })$ & $[3,5],[0,5]$ & $\begin{array}{c} 2 \\ 3\end{array}$ & $\begin{array}{c} 1 \\ a_p=0\end{array}$ \\[.1mm]
\hline
$77B_{[ 3, 5 ]}$ & 2 & $\Q(\sqrt{ 10},\sqrt{ -2 })$& $[0,5],[3,5]$  & $\begin{array}{c} 2 \\ 5\end{array}$ & $\begin{array}{c} 1 \\ 1\end{array}$ \\[.1mm]
\hline
$80B_{[ 1, 0, 1 ]}$ & 4 & $\Q(\sqrt{ -1},\sqrt{ 3 })$&  $[0,0,3],[1,0,3]$ & $\begin{array}{c} 2 \\ 3\end{array}$ & $\begin{array}{c} a_p=0 \\ 1\end{array}$ \\[.1mm]
\hline
$92A_{[ 1, 11 ]}$ & 2 & $\Q(\sqrt{ -1},\sqrt{ 14 })$ & $[0,11],[1,11]$& $\begin{array}{c} 2 \\ 7\end{array}$ & $\begin{array}{c} RPS \\ 1\end{array}$ \\[.1mm]
\hline
$93D_{[ 1, 5 ]}$ & 6 & $\Q(\sqrt{ 2},\sqrt{ -3 })$ & $[1,0],[1,25]$& $\begin{array}{c} 2 \\ 3\end{array}$ & $\begin{array}{c} 1 \\ RPS\end{array}$ \\[.1mm]
\hline
$95A_{[ 1, 3 ]}$ & 12 & $\Q(\sqrt{ -1},\sqrt{ 3 })$ & $[3,15],[3,0]$ & $\begin{array}{c} 2 \\ 3\end{array}$ & $\begin{array}{c} 1 \\ 1\end{array}$ \\[.1mm]
\hline
$95B_{[ 1, 3 ]}$ & 12 & $\Q(\sqrt{ -1},\sqrt{ 3 })$ & $[3,15],[3,0]$& $\begin{array}{c} 2 \\ 3\end{array}$ & $\begin{array}{c} \infty \\ 1\end{array}$ \\[.1mm]
\end{longtable}

\setlongtables
\begin{longtable}{lccccc}
\caption[Modular QM-motives of weight 3 and level $\leq 100$]{Modular QM-motives of weight 3 and level $\leq 100$}\\
\toprule
Label & ord($\epsilon$) & $E$ & Extra twists & Ramification & Slope\\[.1mm] 
\midrule
\endfirsthead
%\toprule
%Label & ord($\epsilon$) & $E$ & Extra twists & Ramification & Slope\\[.1mm] 
%\midrule
%\endfirsthead
\toprule
Label & ord($\epsilon$) & $E$ & Extra twists & Ramification & Slope\\[.1mm] 
\midrule
\endhead
\midrule
\endfoot
\bottomrule
\endlastfoot

$9A^3_{[ 1 ]}$ & $6$ & $\Q(\sqrt{ -3 })$ & $[ 5 ]$ & $3$ & $RPS$ \\[.1mm]
\hline
$10A^3_{[0 , 1 ]}$ & $4$ & $\Q(\sqrt{ -1 })$ & $[ 3 ]$ & $2$ & $St$ \\[.1mm]
\hline
$12A^3_{[ 1, 0 ]}$ & $2$ & $\Q(\sqrt{ -3 })$ & $[ 1, 0 ]$ & $3$ & $St$ \\[.1mm]
\hline
$15A^3_{[ 1, 0 ]}$ & $2$ & $\Q(\sqrt{ -5 })$ & $[ 1, 0 ]$ & $5$ & $St$ \\[.1mm]
\hline
$18A^3_{[ 0 ,3 ]}$ & $2$ & $\Q(\sqrt{ -2 })$ & $[ 3 ]$ & $2$ & $St$ \\[.1mm]
\hline
$19B^3_{[ 9 ]}$ & $2$ & $\Q(\sqrt{ -13 })$ & $[ 9 ]$ & $13$ & $1$ \\[.1mm]
\hline
$20A^3_{[ 0, 1 ]}$ & $4$ & $\Q(\sqrt{ -1 })$ & $[ 0, 3 ]$ & $2$ & $a_p=0$ \\[.1mm]
\hline
$21A^3_{[ 0, 1 ]}$ & $6$ & $\Q(\sqrt{ -3 })$ & $[ 0, 5 ]$ & $3$ & $St$ \\[.1mm]
$21B^3_{[ 0, 1 ]}$ & $6$ & $\Q(\sqrt{ -3 })$ & $[ 0, 5 ]$ & $3$ & $St$ \\[.1mm]
$21C^3_{[ 0, 1 ]}$ & $6$ & $\Q(\sqrt{ -3 })$ & $[ 0, 5 ]$ & $3$ & $St$ \\[.1mm]
$21A^3_{[ 0, 3 ]}$ & $2$ & $\Q(\sqrt{ -3 })$ & $[ 0, 3 ]$ & $3$ & $St$ \\[.1mm]
$21B^3_{[ 1, 2 ]}$ & $6$ & $\Q(\sqrt{ -3},\sqrt{ 15 })$ & $[ 0, 4 ], [ 1, 4 ]$ & $5$ & $1$ \\[.1mm]
\hline
$22A^3_{[ 0 ,5 ]}$ & $2$ & $\Q(\sqrt{ -2 })$ & $[ 5 ]$ & $2$ & $St$ \\[.1mm]
\hline
$24A^3_{[ 0, 0, 1 ]}$ & $2$ & $\Q(\sqrt{ -2 })$ & $[ 0, 0, 1 ]$ & $2$ & $a_p=0$ \\[.1mm]
$24C^3_{[ 0, 1, 1 ]}$ & $2$ & $\Q(\sqrt{ 2},\sqrt{ -7 })$ & $[ 0, 0, 1 ], [ 0, 1, 1 ]$ & $2$ & $RPS$ \\[.1mm]
\hline
$25A^3_{[ 5 ]}$ & $4$ & $\Q(\sqrt{ -1},\sqrt{ 6 })$ & $[ 15 ], [ 5 ]$ & $3$ & $1$ \\[.1mm]
\hline
$26A^3_{[ 0 ,3 ]}$ & $4$ & $\Q(\sqrt{ -1 })$ & $[ 9 ]$ & $2$ & $St$ \\[.1mm]
\hline
$28A^3_{[ 0, 1 ]}$ & $6$ & $\Q(\sqrt{ -3 })$ & $[ 0, 5 ]$ & $3$ & $1$ \\[.1mm]
$28A^3_{[ 0, 3 ]}$ & $2$ & $\Q(\sqrt{ -6 })$ & $[ 0, 3 ]$ & $3$ & $1$ \\[.1mm]
\hline
$30A^3_{[ 0 ,1, 2 ]}$ & $2$ & $\Q(\sqrt{ 2},\sqrt{ -17 })$ & $[ 0, 2 ], [ 1, 0 ]$ & $2$ & $St$ \\[.1mm]
\hline
$31A^3_{[ 5 ]}$ & $6$ & $\Q(\sqrt{ -3 })$ & $[ 25 ]$ & $3$ & $1$ \\[.1mm]
$31A^3_{[ 15 ]}$ & $2$ & $\Q(\sqrt{ -26 })$ & $[ 15 ]$ & $13$ & $1$ \\[.1mm]
\hline
$33A^3_{[ 1, 0 ]}$ & $2$ & $\Q(\sqrt{ -11 })$ & $[ 1, 0 ]$ & $11$ & $St$ \\[.1mm]
\hline
$35A^3_{[ 0, 3 ]}$ & $2$ & $\Q(\sqrt{ -5 })$ & $[ 0, 3 ]$ & $5$ & $St$ \\[.1mm]
$35B^3_{[ 0, 3 ]}$ & $2$ & $\Q(\sqrt{ -5 })$ & $[ 0, 3 ]$ & $5$ & $St$ \\[.1mm]
$35C^3_{[ 2, 3 ]}$ & $2$ & $\Q(\sqrt{ 10},\sqrt{ -1 })$ & $[ 0, 3 ], [ 2, 3 ]$ & $5$ & $RPS$ \\[.1mm]
\hline
$36A^3_{[ 1, 2 ]}$ & $6$ & $\Q(\sqrt{ -3 })$ & $[ 1, 4 ]$ & $3$ & $RPS$ \\[.1mm]
\hline
$38A^3_{[ 0 ,9 ]}$ & $2$ & $\Q(\sqrt{ -2 })$ & $[ 9 ]$ & $2$ & $St$ \\[.1mm]
\hline
$39C^3_{[ 1, 6 ]}$ & $2$ & $\Q(\sqrt{ -35},\sqrt{ 3 })$ & $[ 1, 6 ], [ 1, 0 ]$ & $7$ & $1$ \\[.1mm]
\hline
$40A^3_{[ 0, 0, 1 ]}$ & $4$ & $\Q(\sqrt{ -1 })$ & $[ 0, 0, 3 ]$ & $2$ & $a_p=0$ \\[.1mm]
\hline
$42A^3_{[ 0 ,1, 2 ]}$ & $6$ & $\Q(\sqrt{ -2},\sqrt{ 6 })$ & $[ 0, 4 ], [ 1, 4 ]$ & $2$ & $St$ \\[.1mm]
\hline
$45A^3_{[ 0, 1 ]}$ & $4$ & $\Q(\sqrt{ -1},\sqrt{ 10 })$ & $[ 3, 0 ], [ 3, 3 ]$ & $5$ & $RPS$ \\[.1mm]
$45A^3_{[ 3, 2 ]}$ & $2$ & $\Q(\sqrt{ 7},\sqrt{ -2 })$ & $[ 0, 2 ], [ 3, 2 ]$ & $7$ & $1$ \\[.1mm]
\hline
$47A^3_{[ 23 ]}$ & $2$ & $\Q(\sqrt{ -78 })$ & $[ 23 ]$ & $13$ & $1$ \\[.1mm]
\hline
$48A^3_{[ 1, 0, 0 ]}$ & $2$ & $\Q(\sqrt{ -3 })$ & $[ 1, 0, 0 ]$ & $3$ & $St$ \\[.1mm]
\hline
$50A^3_{[ 0 ,5 ]}$ & $4$ & $\Q(\sqrt{ -1 })$ & $[ 15 ]$ & $2$ & $St$ \\[.1mm]
\hline
$54A^3_{[ 0 ,9 ]}$ & $2$ & $\Q(\sqrt{ -2 })$ & $[ 9 ]$ & $2$ & $St$ \\[.1mm]
\hline
$55D^3_{[ 2, 5 ]}$ & $2$ & $\Q(\sqrt{ -21},\sqrt{ 5 })$ & $[ 2, 0 ], [ 2, 5 ]$ & $3$ & $1$ \\[.1mm]
\hline
$56A^3_{[ 1, 1, 2 ]}$ & $6$ & $\Q(\sqrt{ -3 })$ & $[ 1, 1, 4 ]$ & $2$ & $RPS$ \\[.1mm]
\hline
$57A^3_{[ 0, 9 ]}$ & $2$ & $\Q(\sqrt{ -3 })$ & $[ 0, 9 ]$ & $3$ & $St$ \\[.1mm]
\hline
$60A^3_{[ 0, 1, 0 ]}$ & $2$ & $\Q(\sqrt{ -5 })$ & $[ 0, 1, 0 ]$ & $5$ & $St$ \\[.1mm]
$60A^3_{[ 0, 1, 2 ]}$ & $2$ & $\Q(\sqrt{ -1},\sqrt{ 5 })$ & $[ 0, 0, 2 ], [ 0, 1, 0 ]$ & $5$ & $RPS$ \\[.1mm]
$60A^3_{[ 1, 0, 2 ]}$ & $2$ & $\Q(\sqrt{ -1},\sqrt{ 3 })$ & $[ 1, 0, 2 ], [ 1, 0, 0 ]$ & $3$ & $St$ \\[.1mm]
\hline
$63E^3_{[ 0, 1 ]}$ & $6$ & $\Q(\sqrt{ -3},\sqrt{ 13 })$ & $[ 3, 5 ], [ 0, 5 ]$ & $3$ & $a_p=0$ \\[.1mm]
\hline
$64A^3_{[ 1, 8 ]}$ & $2$ & $\Q(\sqrt{ -1},\sqrt{ 3 })$ & $[ 1, 8 ], [ 1, 0 ]$ & $3$ & $1$ \\[.1mm]
\hline
$72A^3_{[ 0, 0, 3 ]}$ & $2$ & $\Q(\sqrt{ -2 })$ & $[ 0, 0, 3 ]$ & $2$ & $a_p=0$ \\[.1mm]
$72C^3_{[ 1, 1, 0 ]}$ & $2$ & $\Q(\sqrt{ 10},\sqrt{ -6 })$ & $[ 0, 0, 3 ], [ 1, 1, 0 ]$ & $2$ & $RPS$ \\[.1mm]
\hline
$74A^3_{[ 0 ,9 ]}$ & $4$ & $\Q(\sqrt{ -1 })$ & $[ 27 ]$ & $2$ & $St$ \\[.1mm]
$74B^3_{[ 0 ,9 ]}$ & $4$ & $\Q(\sqrt{ -1 })$ & $[ 27 ]$ & $2$ & $St$ \\[.1mm]
$74C^3_{[ 0 ,9 ]}$ & $4$ & $\Q(\sqrt{ -1 })$ & $[ 27 ]$ & $2$ & $St$ \\[.1mm]
\hline
$75A^3_{[ 0, 5 ]}$ & $4$ & $\Q(\sqrt{ -1},\sqrt{ 6 })$ & $[ 0, 15 ], [ 0, 5 ]$ & $3$ & $St$ \\[.1mm]
$75B^3_{[ 0, 5 ]}$ & $4$ & $\Q(\sqrt{ -1},\sqrt{ 6 })$ & $[ 0, 15 ], [ 0, 5 ]$ & $3$ & $St$ \\[.1mm]
$75C^3_{[ 1, 0 ]}$ & $2$ & $\Q(\sqrt{ -11 })$ & $[ 1, 0 ]$ & $11$ & $1$ \\[.1mm]
\hline
$76A^3_{[ 0, 9 ]}$ & $2$ & $\Q(\sqrt{ -29 })$ & $[ 0, 9 ]$ & $29$ & $1$ \\[.1mm]
\hline
$77A^3_{[ 0, 5 ]}$ & $2$ & $\Q(\sqrt{ -7 })$ & $[ 0, 5 ]$ & $7$ & $St$ \\[.1mm]
$77A^3_{[ 2, 5 ]}$ & $6$ & $\Q(\sqrt{ -3},\sqrt{ 21 })$ & $[ 4, 5 ], [ 2, 0 ]$ & $7$ & $RPS$ \\[.1mm]
\hline
$78A^3_{[ 0 ,1, 6 ]}$ & $2$ & $\Q(\sqrt{ 2},\sqrt{ -5 })$ & $[ 0, 6 ], [ 1, 0 ]$ & $2$ & $St$ \\[.1mm]
$78B^3_{[ 0 ,1, 6 ]}$ & $2$ & $\Q(\sqrt{ 2},\sqrt{ -5 })$ & $[ 0, 6 ], [ 1, 0 ]$ & $2$ & $St$ \\[.1mm]
\hline
$80B^3_{[ 1, 0, 2 ]}$ & $2$ & $\Q(\sqrt{ 2},\sqrt{ -3 })$ & $[ 0, 0, 2 ], [ 1, 0, 0 ]$ & $2$ & $a_p=0$ \\[.1mm]
\hline
$84A^3_{[ 0, 0, 1 ]}$ & $6$ & $\Q(\sqrt{ -3 })$ & $[ 0, 0, 5 ]$ & $3$ & $St$ \\[.1mm]
$84A^3_{[ 0, 0, 3 ]}$ & $2$ & $\Q(\sqrt{ -3 })$ & $[ 0, 0, 3 ]$ & $3$ & $St$ \\[.1mm]
$84B^3_{[ 0, 1, 2 ]}$ & $6$ & $\Q(\sqrt{ -3},\sqrt{ 15 })$ & $[ 0, 0, 2 ], [ 0, 1, 4 ]$ & $5$ & $1$ \\[.1mm]
$84C^3_{[ 0, 1, 2 ]}$ & $6$ & $\Q(\sqrt{ -3},\sqrt{ 105 })$ & $[ 0, 0, 4 ], [ 0, 1, 4 ]$ & $5$ & $1$ \\[.1mm]
$84A^3_{[ 1, 0, 2 ]}$ & $6$ & $\Q(\sqrt{ -3 })$ & $[ 1, 0, 4 ]$ & $3$ & $St$ \\[.1mm]
\hline
$86A^3_{[ 0 ,21 ]}$ & $2$ & $\Q(\sqrt{ -2 })$ & $[ 21 ]$ & $2$ & $St$ \\[.1mm]
\hline
$90A^3_{[ 0 ,0, 1 ]}$ & $4$ & $\Q(\sqrt{ -1 })$ & $[ 0, 3 ]$ & $2$ & $St$ \\[.1mm]
$90B^3_{[ 0 ,0, 1 ]}$ & $4$ & $\Q(\sqrt{ -1 })$ & $[ 0, 3 ]$ & $2$ & $St$ \\[.1mm]
$90A^3_{[ 0 ,3, 2 ]}$ & $2$ & $\Q(\sqrt{ 2},\sqrt{ -1 })$ & $[ 0, 2 ], [ 3, 2 ]$ & $2$ & $St$ \\[.1mm]
\hline
$91A^3_{[ 3, 4 ]}$ & $6$ & $\Q(\sqrt{ -3},\sqrt{ 39 })$ & $[ 0, 4 ], [ 3, 0 ]$ & $13$ & $RPS$ \\[.1mm]
$91C^3_{[ 3, 6 ]}$ & $2$ & $\Q(\sqrt{ 26},\sqrt{ -1 })$ & $[ 3, 0 ], [ 3, 6 ]$ & $13$ & $RPS$ \\[.1mm]
\hline
$93A^3_{[ 0, 15 ]}$ & $2$ & $\Q(\sqrt{ -3 })$ & $[ 0, 15 ]$ & $3$ & $St$ \\[.1mm]
\hline
$96A^3_{[ 0, 0, 1 ]}$ & $2$ & $\Q(\sqrt{ -2},\sqrt{ 3 })$ & $[ 0, 0, 1 ], [ 1, 0, 0 ]$ & $3$ & $RPS$ \\[.1mm]
$96B^3_{[ 0, 0, 1 ]}$ & $2$ & $\Q(\sqrt{ 7},\sqrt{ -2 })$ & $[ 0, 0, 1 ], [ 1, 0, 1 ]$ & $7$ & $1$ \\[.1mm]
\hline
$99C^3_{[ 0, 5 ]}$ & $2$ & $\Q(\sqrt{ -138},\sqrt{ -3 })$ & $[ 3, 0 ], [ 0, 5 ]$ & $23$ & $1$ \\[.1mm]
\hline
$100B^3_{[ 0, 5 ]}$ & $4$ & $\Q(\sqrt{ -1},\sqrt{ 6 })$ & $[ 0, 15 ], [ 0, 5 ]$ & $3$ & $3$ \\[.1mm]
\end{longtable}

\setlongtables
\begin{longtable}{lccccc}
\caption[Modular QM-motives of weight 4 and level $\leq 100$]{Modular QM-motives of weight 4 and level $\leq 100$}\\
\toprule
Label & ord($\epsilon$) & $E$ & Extra twists & Ramification & Slope\\[.1mm] 
\midrule
\endfirsthead
\toprule
Label & ord($\epsilon$) & $E$ & Extra twists & Ramification & Slope\\[.1mm] 
\endhead
\endfoot
\bottomrule
\endlastfoot
$12A^4_{[ 1, 1 ]}$ & $2$ & $\Q(\sqrt{ -5},\sqrt{ 3 })$ & $[ 0, 1 ], [ 1, 0 ]$ & $\begin{array}{c} 3 \\ 5 \end{array}$ & $\begin{array}{c} RPS \\ 1\end{array}$ \\[.1mm]
\hline
$21B^4_{[ 1, 3 ]}$ & $2$ & $\Q(\sqrt{ -6},\sqrt{ 102 })$ & $[ 1, 3 ], [ 0, 3 ]$ & $\begin{array}{c} 3 \\ 17 \end{array}$ & $\begin{array}{c} RPS \\ 1\end{array}$ \\[.1mm]
\hline
$27C^4_{[ 0 ]}$ & $1$ & $\Q(\sqrt{ 2 })$ & $[ 9 ]$ & $\begin{array}{c} 2 \\ 3 \end{array}$ & $\begin{array}{c} 1 \\ a_p=0\end{array}$ \\[.1mm]
\hline
$35A^4_{[ 1, 3 ]}$ & $4$ & $\Q(\sqrt{ 5},\sqrt{ -1 })$ & $[ 0, 3 ], [ 3, 0 ]$ & $\begin{array}{c} 2 \\ 5 \end{array}$ & $\begin{array}{c} 3 \\ RPS\end{array}$ \\[.1mm]
\hline
$36B^4_{[ 1, 3 ]}$ & $2$ & $\Q(\sqrt{ 30},\sqrt{ -2 })$ & $[ 0, 3 ], [ 1, 0 ]$ & $\begin{array}{c} 2 \\ 3 \end{array}$ & $\begin{array}{c} RPS \\ a_p=0\end{array}$ \\[.1mm]
\hline
$48B^4_{[ 1, 0, 1 ]}$ & $2$ & $\Q(\sqrt{ -2},\sqrt{ 6 })$ & $[ 1, 0, 0 ], [ 1, 0, 1 ]$ & $\begin{array}{c} 2 \\ 3 \end{array}$ & $\begin{array}{c} a_p=0 \\ RPS\end{array}$ \\[.1mm]
\hline
$56B^4_{[ 1, 1, 3 ]}$ & $2$ & $\Q(\sqrt{ -3},\sqrt{ 21 })$ & $[ 0, 0, 3 ], [ 1, 1, 3 ]$ & $\begin{array}{c} 3 \\ 7 \end{array}$ & $\begin{array}{c} 1 \\ RPS\end{array}$ \\[.1mm]
\hline
$57B^4_{[ 1, 9 ]}$ & $2$ & $\Q(\sqrt{ 17},\sqrt{ -10 })$ & $[ 1, 9 ], [ 0, 9 ]$ & $\begin{array}{c} 5 \\ 17 \end{array}$ & $\begin{array}{c} 1 \\ 1\end{array}$ \\[.1mm]
\hline
$63B^4_{[ 3, 3 ]}$ & $2$ & $\Q(\sqrt{ -222},\sqrt{ -2 })$ & $[ 3, 0 ], [ 0, 3 ]$ & $\begin{array}{c} 2 \\ 3 \end{array}$ & $\begin{array}{c} 3 \\ a_p=0\end{array}$ \\[.1mm]
\hline
$72C^4_{[ 0, 1, 0 ]}$ & $2$ & $\Q(\sqrt{ 22},\sqrt{ -10 })$ & $[ 0, 0, 3 ], [ 0, 1, 0 ]$ & $\begin{array}{c} 2 \\ 5 \end{array}$ & $\begin{array}{c} RPS \\ 1\end{array}$ \\[.1mm]
\hline
$80B^4_{[ 1, 0, 1 ]}$ & $4$ & $\Q(\sqrt{ -1},\sqrt{ 35 })$ & $[ 1, 0, 0 ], [ 1, 0, 3 ]$ & $\begin{array}{c} 2 \\ 7 \end{array}$ & $\begin{array}{c} a_p=0 \\ 1\end{array}$ \\[.1mm]
\hline
$100D^4_{[ 1, 5 ]}$ & $4$ & $\Q(\sqrt{ 11},\sqrt{ -1},\sqrt{ 5 })$ & $[ 1, 10 ], [ 0, 5 ], [ 1, 5 ]$ & $\begin{array}{c} 11 \\ 2 \end{array}$ & $\begin{array}{c} 1 \\ RPS\end{array}$ \\[.1mm]

\end{longtable}

\setlongtables
\begin{longtable}{lccccc}
\caption[Modular QM-motives of weight 5 and level $\leq 100$]{Modular QM-motives of weight 5 and level $\leq 100$}\\
\toprule
Label & ord($\epsilon$) & $E$ & Extra twists & Ramification & Slope\\[.1mm] 
\midrule
\endfirsthead
\toprule
Label & ord($\epsilon$) & $E$ & Extra twists & Ramification & Slope\\[.1mm] 
\midrule
\endhead
\endfoot
\bottomrule
\endlastfoot
$5A^5_{[ 1 ]}$ & $4$ & $\Q(\sqrt{ -1 })$ & $[ 3 ]$ & $2$ & $1$ \\[.1mm]
\hline
$6A^5_{[ 0 ,1 ]}$ & $2$ & $\Q(\sqrt{ -2 })$ & $[ 1 ]$ & $2$ & $St$ \\[.1mm]
\hline
$8B^5_{[ 1, 1 ]}$ & $2$ & $\Q(\sqrt{ -15 })$ & $[ 1, 1 ]$ & $5$ & $1$ \\[.1mm]
\hline
$9A^5_{[ 3 ]}$ & $2$ & $\Q(\sqrt{ -2 })$ & $[ 3 ]$ & $2$ & $1$ \\[.1mm]
\hline
$10A^5_{[0 , 1 ]}$ & $4$ & $\Q(\sqrt{ -1 })$ & $[ 3 ]$ & $2$ & $St$ \\[.1mm]
$10B^5_{[ 0 ,1 ]}$ & $4$ & $\Q(\sqrt{ -1 })$ & $[ 3 ]$ & $2$ & $St$ \\[.1mm]
\hline
$11B^5_{[ 5 ]}$ & $2$ & $\Q(\sqrt{ -30 })$ & $[ 5 ]$ & $2$ & $1$ \\[.1mm]
\hline
$15C^5_{[ 1, 2 ]}$ & $2$ & $\Q(\sqrt{ 10},\sqrt{ -26 })$ & $[ 0, 2 ], [ 1, 2 ]$ & $\begin{array}{c} 2 \\ 13 \\ 5\end{array}$ & $\begin{array}{c} 1 \\ 1\\ RPS\end{array}$ \\[.1mm]
\hline
$16A^5_{[ 1, 0 ]}$ & $2$ & $\Q(\sqrt{ -3 })$ & $[ 1, 0 ]$ & $3$ & $1$ \\[.1mm]
\hline
$21A^5_{[ 0, 1 ]}$ & $6$ & $\Q(\sqrt{ -3 })$ & $[ 0, 5 ]$ & $3$ & $St$ \\[.1mm]
$21B^5_{[ 0, 1 ]}$ & $6$ & $\Q(\sqrt{ -3 })$ & $[ 0, 5 ]$ & $3$ & $St$ \\[.1mm]
\hline
$25B^5_{[ 5 ]}$ & $4$ & $\Q(\sqrt{ -1},\sqrt{ 6 })$ & $[ 15 ], [ 5 ]$ & $3$ & $1$ \\[.1mm]
$25C^5_{[ 5 ]}$ & $4$ & $\Q(\sqrt{ -1},\sqrt{ 21 })$ & $[ 15 ], [ 10 ]$ & $\begin{array}{c} 2 \\ 3 \\ 7\end{array}$ & $\begin{array}{c} 1 \\ 1\\ 1\end{array}$ \\[.1mm]
\hline
$27B^5_{[ 9 ]}$ & $2$ & $\Q(\sqrt{ -6 })$ & $[ 9 ]$ & $2$ & $1$ \\[.1mm]
\hline
$28A^5_{[ 0, 3 ]}$ & $2$ & $\Q(\sqrt{ -3 })$ & $[ 0, 3 ]$ & $3$ & $1$ \\[.1mm]
\hline
$35C^5_{[ 2, 3 ]}$ & $2$ & $\Q(\sqrt{ -6 })$ & $[ 2, 3 ]$ & $2$ & $1$ \\[.1mm]
\hline
$36A^5_{[ 0, 3 ]}$ & $2$ & $\Q(\sqrt{ -2 })$ & $[ 0, 3 ]$ & $2$ & $a_p=0$ \\[.1mm]
$36C^5_{[ 1, 0 ]}$ & $2$ & $\Q(\sqrt{ 7},\sqrt{ -1 })$ & $[ 1, 3 ], [ 1, 0 ]$ & $7$ & $1$ \\[.1mm]
\hline
$40A^5_{[ 0, 0, 1 ]}$ & $4$ & $\Q(\sqrt{ -1 })$ & $[ 0, 0, 3 ]$ & $2$ & $a_p=0$ \\[.1mm]
\hline
$44B^5_{[ 0, 5 ]}$ & $2$ & $\Q(\sqrt{ -206 })$ & $[ 0, 5 ]$ & $2$ & $a_p=0$ \\[.1mm]
\hline
$45A^5_{[ 0, 1 ]}$ & $4$ & $\Q(\sqrt{ -1 })$ & $[ 0, 3 ]$ & $2$ & $1$ \\[.1mm]
$45B^5_{[ 0, 1 ]}$ & $4$ & $\Q(\sqrt{ -1 })$ & $[ 0, 3 ]$ & $2$ & $1$ \\[.1mm]
$45D^5_{[ 0, 1 ]}$ & $4$ & $\Q(\sqrt{ -1},\sqrt{ 10 })$ & $[ 3, 0 ], [ 3, 3 ]$ & $5$ & $RPS$ \\[.1mm]
\hline
$48A^5_{[ 1, 0, 0 ]}$ & $2$ & $\Q(\sqrt{ -3 })$ & $[ 1, 0, 0 ]$ & $3$ & $St$ \\[.1mm]
$48B^5_{[ 1, 0, 0 ]}$ & $2$ & $\Q(\sqrt{ -3 })$ & $[ 1, 0, 0 ]$ & $3$ & $St$ \\[.1mm]
\hline
$54A^5_{[0 , 9 ]}$ & $2$ & $\Q(\sqrt{ -2 })$ & $[ 9 ]$ & $2$ & $St$ \\[.1mm]
\hline
$56A^5_{[ 1, 1, 0 ]}$ & $2$ & $\Q(\sqrt{ -7 })$ & $[ 1, 1, 0 ]$ & $7$ & $St$ \\[.1mm]
\hline
$60A^5_{[ 0, 1, 0 ]}$ & $2$ & $\Q(\sqrt{ -5 })$ & $[ 0, 1, 0 ]$ & $5$ & $St$ \\[.1mm]
\hline
$63C^5_{[ 0, 3 ]}$ & $2$ & $\Q(\sqrt{ 10},\sqrt{ -106 })$ & $[ 3, 0 ], [ 3, 3 ]$ & $2$ & $1$ \\[.1mm]
\hline
$64A^5_{[ 1, 8 ]}$ & $2$ & $\Q(\sqrt{ -1},\sqrt{ 3 })$ & $[ 1, 8 ], [ 1, 0 ]$ & $3$ & $1$ \\[.1mm]
$64B^5_{[ 1, 8 ]}$ & $2$ & $\Q(\sqrt{ 51},\sqrt{ -1 })$ & $[ 1, 8 ], [ 0, 8 ]$ & $3$ & $1$ \\[.1mm]
\hline
$72A^5_{[ 0, 0, 3 ]}$ & $2$ & $\Q(\sqrt{ -2 })$ & $[ 0, 0, 3 ]$ & $2$ & $a_p=0$ \\[.1mm]
$72B^5_{[ 0, 0, 3 ]}$ & $2$ & $\Q(\sqrt{ -2 })$ & $[ 0, 0, 3 ]$ & $2$ & $a_p=0$ \\[.1mm]
$72B^5_{[ 1, 1, 0 ]}$ & $2$ & $\Q(\sqrt{ -15 })$ & $[ 1, 1, 0 ]$ & $5$ & $1$ \\[.1mm]
\hline
$75B^5_{[ 0, 5 ]}$ & $4$ & $\Q(\sqrt{ -1},\sqrt{ 6 })$ & $[ 0, 15 ], [ 0, 5 ]$ & $3$ & $St$ \\[.1mm]
$75C^5_{[ 0, 5 ]}$ & $4$ & $\Q(\sqrt{ -1},\sqrt{ 6 })$ & $[ 0, 15 ], [ 0, 5 ]$ & $3$ & $St$ \\[.1mm]
$75C^5_{[ 1, 0 ]}$ & $2$ & $\Q(\sqrt{ -14 })$ & $[ 1, 0 ]$ & $2$ & $1$ \\[.1mm]
$75D^5_{[ 1, 0 ]}$ & $2$ & $\Q(\sqrt{ -35 })$ & $[ 1, 0 ]$ & $5$ & $a_p=0$ \\[.1mm]
\hline
$80D^5_{[ 0, 0, 1 ]}$ & $4$ & $\Q(\sqrt{ -1 })$ & $[ 0, 0, 3 ]$ & $2$ & $a_p=0$ \\[.1mm]
\hline
$81D^5_{[ 9 ]}$ & $6$ & $\Q(\sqrt{ 2},\sqrt{ -3 })$ & $[ 27 ], [ 45 ]$ & $2$ & $1$ \\[.1mm]
\hline
$90A^5_{[ 0 ,0, 1 ]}$ & $4$ & $\Q(\sqrt{ -1 })$ & $[ 0, 3 ]$ & $2$ & $St$ \\[.1mm]
$90B^5_{[ 0 ,0, 1 ]}$ & $4$ & $\Q(\sqrt{ -1 })$ & $[ 0, 3 ]$ & $2$ & $St$ \\[.1mm]
\hline
$99B^5_{[ 0, 5 ]}$ & $2$ & $\Q(\sqrt{ -30 })$ & $[ 0, 5 ]$ & $2$ & $1$ \\[.1mm]
\hline
$100A^5_{[ 0, 5 ]}$ & $4$ & $\Q(\sqrt{ -1},\sqrt{ 6 })$ & $[ 0, 15 ], [ 0, 5 ]$ & $3$ & $1$ \\[.1mm]
$100B^5_{[ 0, 5 ]}$ & $4$ & $\Q(\sqrt{ -1},\sqrt{ -69 })$ & $[ 0, 5 ], [ 0, 10 ]$ & 
$\begin{array}{c} 2 \\ 3 \\ 23\end{array}$ & $\begin{array}{c} a_p=0 \\ 1 \\ 1 \end{array}$ \\[.1mm]

\end{longtable}

\end{small}

Let us describe how the above tables are labeled.
The format is similar to that used in the tables in 
the Appendix of \cite{Baker-Gonzalez-Gonzalez-Poonen03}. 
The first column contains Galois conjugacy classes of 
primitive forms of given level, weight and nebentypus.
The ordering we use to list these forms is described by a 
function which maps a  primitive form $f$ 
to a label of the form $NX^k_\eps$ (for example $19B^3_{[9]}$),
where $N$ is the level of $f$, $X$ is a letter or string of letters 
in $\{A,B,\dots,Z,AA,BB,\dots\}$, $\eps$ is an encoding of the nebentypus of $f$ 
(described in more detail below) and $k$ is the weight of $f$. 
When $k =2$ we omit the superscript 2. 

To construct $X$ assume that $k$, $N$ and $\eps$ are fixed.
To $f = \sum a_n q^n$ associate the infinite sequence of integers 
${\mathbf t}_f = (\Tr_{E/\Q} a_1, \Tr_{E/\Q} a_2, \dots)$.
Choose $X \in \{A,B,\dots,Z,AA,BB,\dots\}$
according to the position of ${\mathbf t}_f$
in the set $\{\, {\mathbf t}_g : \text{ $g$ primiti-}$ $\text{ve of weight $k$, level $N$ and 
                                        nebentypus $\eps$} \,\}$ 
sorted in increasing dictionary order. 
Notice that ${\mathbf t}_f$ determines the Galois conjugacy class of $f$.
The above ordering was introduced by J. Cremona in the case of trivial nebentypus and 
weight $2$ and by W. Stein in the  general situation. 

The encoding of the nebentypus $\eps:(\Z/N)^*\rightarrow \C^*$ 
is done as follows. 
Let $N=\prod{p_n^{\alpha_n}}$ be the prime-ordered factorization of $N$.
Then for each $p_n$ there exists a unique Dirichlet character 
$\eps_{p_n}:(\Z/p_n^{\alpha_n})^ \times \rightarrow \C^*$ such that $\eps=\prod{\eps_{p_n}}$. 
Fix $p = p_n$ momentarily and write $\eps_p$ for $\eps_{p_n}$.
If $p$ is odd let $g_p$ be the smallest positive integer 
that generates $(\Z/p^{\alpha})^\times$,
and if $p=2$ and $\alpha \le 2$,
let $g_p=-1$. In the above cases $\eps_{p}$ is determined by the integer 
$e_p \in [0,\varphi(p^\alpha))$
such that $\eps_p(g_p)=e^{2\pi i e_p/\varphi(p^\alpha)}$.
If $p=2$ and $\alpha > 2$,
then $(\Z/2^\alpha)^\times \isom \Z/2 \times \Z/2^{\alpha-2}$ where the first factor
is generated by $-1$ and the second factor by 5. Thus in this case $\eps_2$ is determined 
by a pair of integers $e_2' \in [0,2)$, $e_2'' \in [0, 2^{\alpha-2})$ such that 
$\eps_2(-1)=e^{2\pi i e_2'/2}$
and $\eps_2(5)=e^{2\pi i e_2''/2^{\alpha-2}}$.
We denote the pair $e_2', e_2''$ by $e_2$.
Finally we denote $\eps$ by $[e_{p_n}\,:\,p_n|N]$.

The middle columns are as follows: column 2 contains the order of $\eps$, column 3
contains the Hecke field $E$ (recall that $F = \Q)$, column 4 lists a generating
set for the extra twists (encoded in a similar manner as described above for $\eps$),
and column 5 lists the primes where the endomorphism algebra $X$ ramifies.

The last column lists the numbers $m_v$ (normalized slope) if $p$ is prime to $N$ and 
$a_p \neq 0$. If a (finite) integer occurs in this column it is always ODD as predicted
by the main result of this paper (Theorem~\ref{parity}). 
If $a_p = 0$ and $p$ is still prime to $N$ then
$m_v = \infty$ and the ramification is controlled by Proposition~\ref{a_p=0}.
On the other hand if some $p | N$ is a prime of ramification then 
we give some further information as follows. 
Recall that $C$ denotes the conductor of $\eps$, and $N_p$ and $C_p$ denote the exponent of 
$p$ of the exact power of $p$ diving $N$ and $C$ respectively. 
If $N_p = C_p$ we write $RPS$ for ramified principal series. 
If $N_p = 1$ and $C_p = 0$ then we write $St$ for Steinberg. Finally $a_p$ vanishes 
in the remaining cases $N_p \neq C_p$. These include the
cases which are twists of previous cases, in which case the ramification can
be sometimes explained, but also includes the cases where the local representation 
is supercuspidal. In either case we simply write $a_p = 0$. It is 
worth mentioning that while $a_p$ can vanish both when $p \notdivides N$ and $p | N$,
in the former case it only occurs once (see $95B_{[1,3]}$ below) within the scope of the 
tables.

\vskip .2cm

{\noindent \bf Corrections to \cite{Brown-Ghate03}:}
We take this opportunity to correct some errors in \cite{Brown-Ghate03}. 
The last two errors do not occur in the electronic version of the article.
\begin{itemize}
  \item[(i)] page 1655, line 8: $N_p \geq C_p \geq 1$ should be $N_p \geq C_p \geq 0$
  \item[(ii)] page 1669, line 12: $\frac{1}{2}(u^2-1)$ should be $(u^2-1)/8$
  \item[(iii)] page 1670, last few lines: `supersingular primes' should read `primes $p$ for which
                                   $a_p = 0$', and the last phrase should read 
                                   `for non-CM forms the
                                   density of primes $p$ for which $a_p = 0$ is $0$' 
  \item[(iv)] page 1672, line 9 from the bottom: `ordinary primes' should read 
                                   `primes $p$ for which $a_p = 0$'.
\end{itemize}

\vskip .2 cm

{\noindent \bf Acknowledgements:} The first author would like to thank
N. Fakhruddin, M. Nori, and V. Srinivas for useful discussions
concerning the material in Section 3. He would also like to thank the
third author for arranging visits to Barcelona in 2002 and 2004. 
The second author held the position of {\it Contratado Doctor 
C.A.M. (02/0294/2002)} during the preparation of this work. The third
author is supported by research projects BFM-2003-0678-C02-01 and
2002SGR 00148. The tables in 
this paper were prepared using W. Stein's Magma modular forms package.

%\bibliographystyle{plain}
%\bibliography{slopes}

\end{document}